\documentclass[a4paper,10pt]{article}
\usepackage[utf8]{inputenc}
\usepackage[utf8]{inputenc}
\usepackage{amsmath,amssymb,amsthm} 
\newcommand{\N}{\mathbb{N}}
\newcommand{\R}{\mathbb{R}}
\renewcommand{\H}{\mathcal{H}} 
\newcommand{\C}{\mathcal{C}} 
\newcommand{\myN}{K}
\newcommand{\myK}{k}
\newcommand{\tr}{\mathrm{tr}}
\newcommand{\B}{\mathcal{B}}
\newcommand{\Bt}{\tilde{\mathcal{B}}}
\newcommand{\Nu}{{\mathcal N}}
\newcommand{\Ra}{{\mathcal R}}
\usepackage{todonotes}

\newtheorem{lemma}{Lemma} 
\newtheorem{theorem}{Theorem} 
\newtheorem{definition}{Definition} 
\newtheorem{corollary}{Corollary} 
\title{On the type of ill-posedness of generalized Hilbert 
matrices and related operators}
\author{Stefan Kindermann\footnotemark[1]}
\date{}


\begin{document}
\footnotetext[1]{Industrial Mathematics Institute, Johannes Kepler University Linz, Alternbergergstraße 69, 4040 Linz, Austria. Email: kindermann@indmath.uni-linz.ac.at}
\maketitle

\begin{abstract}
We consider infinite-dimensional 
generalized Hilbert matrices of the form  $\H_{i,j} = \frac{d_i d_j}{x_i + x_j}$, where 
$d_i$ are nonnegative weights and $x_i$ are pairwise disjoint positive numbers. 
We state sufficient and, for monotonically rearrangeable $x_i$, also necessary conditions
for $d_i$, $x_i$
such that the induced  operator from $\ell^2 \to \ell^2$ and related operators are 
well-defined, bounded, or
compact. Furthermore, we give conditions, when this operator is injective and 
ill-posed. 
\end{abstract}

\section{Introduction}
Throughout this article, let $x_i>0$, $i \in \N$, $x_i \not = x_j$, $i \not = j$, 
be a sequence of disjoint  positive numbers, and let $d_i\geq 0$, $i \in \N$,  be a sequence of 
nonnegative weights. We consider infinite-dimensional 
generalized Hilbert matrices with entries 
\begin{equation}\label{defHil} \H_{i,j} = \frac{d_i d_j}{x_i + x_j} \qquad i,j = 1,\ldots  \end{equation}
Associated with this, we introduce the corresponding operator 
(also denoted  by $\H$) 
on the $\ell^2$-sequence space by the usual infinite-dimensional 
matrix-vector multiplication 
\begin{align*} 
  \H :\ell^2 \to \ell^2:  \qquad 
      (f_i)_i \to \left(\sum_{j=1}^\infty \H_{i,j} f_j\right)_i \, .
\end{align*}
We would like to study the ill-posedness of the corresponding 
operator equation in $\ell^2$. 

The matrix $\H$ is a generalization of the usual well-known Hilbert 
matrix 
\[ \left(\frac{1}{i + j -1}\right)_{i,j=1,\ldots},  
\]
 which is obtained by setting 
\[ d_i = 1, \qquad x_i = i -\frac{1}{2}\, .  
\]
We refer to this  as the {\em standard} setting. 
An instance of the standard Hilbert matrix $x_i = i -\frac{1}{2}$ with the 
weights $d_i = \frac{1}{i}$ has occurred in recent studies on the 
composition of the Hausdorff moment operator and integration
operators~\cite{HofMat22,Gerth22a,KiHo24}.

The Hilbert matrix is a famous object arising, e.g., in the Hausdorff moment problem 
or the Laplace transform. From the perspective of inverse problems, it is 
interesting as it represents a type-1 ill-posed problem, and it has 
been in the focus of recent research, in particular of Bernd Hofmann and 
collaborators~\cite{Gerth21,Gerth22,
Gerth22a,HofMat22}. 

Let us recall that an operator in Hilbert spaces gives rise to an 
ill-posed problem if and only if its range is non-closed 
(by Nashed's definition of ill-posedness~\cite{Nashed86}). 
Nashed has also introduced the distinction between type-I and 
type-II ill-posed problems in \cite{Nashed86}, where type-I problems are those where 
the range of the operator is {\em non-compact} (but non-closed), while 
type-II problems have {\em compact} range (which is necessarily non-closed in 
infinite-dimensions). 
Most of the inverse problems in practice 
are type-II problems, but  inversion of the  Hilbert matrix is 
a type-I problem, which makes the corresponding operator an 
attractive object of research. Although the classical regularization 
theory covers type-I problems, they have some particular uncommon 
features like the lack of a notion of degree of ill-posedness or
the lack  of a regularization by projection~\cite{HofKind10}.

Part of the recent research of 
 Bernd Hofmann deals with these type-I operators, for instance,
the Hausdorff moment problem or the Ces\`{a}ro-Hardy operator and 
the question whether the degree of ill-posedness is destroyed by
combining type-I and type-II operators \cite{HofTau97,HofMat22,Hof06,HofFlei99,Gerth21,Gerth22,
Gerth22a,HeinHof03,Freitag05,DFH24,HofWolf05,HofWolf09,HofKind10}. 

The focus of the current article is to investigate properties  of 
the generalized Hilbert operator and whether it gives rise to a 
type-I or type-II ill-posed problem. This operator, as we show,
is also related to other problems like a generalized moment problem, the sampled 
Laplace transform, and the inversion of the generalized Ces\`{a}ro-Hardy operator.  

We will provide in Section~\ref{sec:2} necessary and sufficient condition for 
the sequences $d_i, x_i$ 
such that  $\H$ is  1.) well-defined, 2.) bounded, and  3.) compact. 
Furthermore, in Section~\ref{sec:3}, we provide sufficient conditions 
that imply injectivity of 
 $\H$ as well as conditions that imply 
ill-posedness of $\H$ in the sense of Nashed,
thus yielding a rather complete  understanding of the type of ill-posedness of $\H$.

\section{Forward properties}\label{sec:2}
We investigate the mapping properties of the generalized Hilbert matrix 
with respect to well-definedness, boundedness, and compactness in $\ell^2$. 
Let us start with some relatives of $\H$.

\subsection{Related operators}
The generalized Hilbert matrix is self-adjoint and arises as 
normal operator of at least two important problems:

Given a sequence of disjoint  numbers $\lambda_n >-\frac{1}{2}$
and nonnegative weights $d_n\geq 0$, 
we define the {\em generalized Hausdorff operator}:
\begin{align*} 
 A : L^2(0,1) &\to \ell^2 \\
  f &\to  \left( d_n  \int_0^1 f(t) t^{\lambda_n} dt\right )_{n} \qquad n = 1,\ldots 
\end{align*}
with adjoint 
\begin{align*} 
 A^* : \ell^2 &\to L^2(0,1)  \\
  (f_n)_{n}&\to \sum_{i=1}^\infty  d_i f_i t^{\lambda_i},
\end{align*}
where the last object is considered a function of $t$. 
It follows that 
\[ \H = A A^* \qquad \text{ with } \quad x_i = \lambda_i + \frac{1}{2} .\]
The standard case is the well-known Hausdorff operator 
that maps a function $f$ to its sequence of 
moments $\left(\int_0^1 f(t) t^{i-1} dt\right)_{i=1}^\infty$.

Related to this problem is the {\em sampled Laplace transform}, where 
functions are mapped to the (weighted) values of the Laplace transform at $x_i$:
The corresponding operator is as follows: 
\begin{align}\label{lap}
\begin{split} 
  L : L^2(0,\infty) &\to \ell^2  \\ 
        f & \to \left(d_n \int_0^\infty e^{-\lambda_n t} f(t) dt\right)_{n} \qquad n = 1,\ldots,  
        \end{split}
\end{align}
and its adjoint 
\begin{align}\label{lapad} 
\begin{split} 
 L^* : \ell^2 &\to L^2(0,\infty) , \\
  (f_n)_{n} &\to \sum_{i=1}^\infty  d_i f_i  e^{-\lambda_i t} \, .
   \end{split}
\end{align}
We have again 
\begin{equation}\label{normallap}
\H = L L^* \qquad \text{ with}  \qquad x_i = \lambda_i.   \end{equation}
Thus, the $x_i$ in the definition of $\H$ have the interpretation 
of  sampling points of the Laplace transform. 
The standard case refers to sampling at the midpoints of the integers. 
Of course, the Hausdorff problem and the sampled Laplace transform are 
related by a simple change of variables.

For the analysis we also need certain summation operators. 
Given the sequences $d_i,x_i$, 
we define the generalized Ces\`{a}ro-Hardy operator by 
\begin{align}\label{ces}
\begin{split}
 \C:  \ell^2 & \to \ell^2 \\ 
 (f_n)_{n=1}^\infty &\to  \left( \frac{d_n}{x_n} \sum_{x_k \leq x_n} d_k f_k  \right)_{n=1}^\infty   
 \end{split}
\end{align} 
as well as        its adjoint 
\begin{align}\label{cesad}
\begin{split}
 \C^*:  \ell^2 & \to \ell^2 \\ 
       (f_n)_{n=1}^\infty &\to   \left( d_n \sum_{  x_k  \geq x_n } \frac{d_k f_k}{x_k} \right)_{n=1}^\infty  \, .
 \end{split}
\end{align}

In the (almost) standard case $d_i = 1, x_i = i$, $i \in \N$,
the Ces\`{a}ro-Hardy operator is the well-known cumulative mean (Ces\`{a}ro mean):
For $f = (f_n)$,
\[ (\C f)_n = \frac{1}{n} \left(f_1 + f_2 \cdots  + f_n\right)  \qquad n = 1,\ldots \]
Boundedness of this operator is the essence of the 
Hardy inequality \cite[Section~9.8]{Polya}.
In the above-mentioned almost standard case, the 
Ces\`{a}ro-Hardy operator   $\C$ represents an 
ill-posed operator that is non-compact (i.e., type-I), 
(this follows from  \cite{Brown65}, where the spectrum was identified but also from our analysis.)
It shares this property  with the continuous 
Ces\`{a}ro-Hardy operator, where sums are replaced by integrals:  
\[ \C_c: L^2(0,1) \to L^2(0,1) \qquad f \to \frac{1}{t} \int_0^t f(s) ds. \]
Its ill-posedness was investigated in \cite{DFH24}, and a ``curious'' composition 
with $A$ was considered in \cite{KiHo24}.

Let us remark the following useful observation: 
For a given generalized Hilbert matrix with $x_i,d_i$, we may rewrite 
the entries as 
\begin{equation}\label{trans} \H_{i,j} = \frac{d_i d_j}{x_i + x_j} = 
 \frac{\frac{d_i}{x_j} \frac{d_j}{x_j} }{\frac{1}{x_i} + \frac{1}{x_j}} \, ,
 \end{equation}
and thus consider the same matrix generated by 
the sequences $x_i^{-1},\frac{d_i}{x_i}$  in place of 
 $x_i,d_i$. The conditions in the main theorem in the next section are completely invariant under 
 this transformation. 
 
An interesting observation is that, 
denoting by $\C_{x,d}$ the Ces\`{a}ro-Hardy operator in \eqref{ces} 
generated by $x_i,d_i$, then 
the operator generated by the transformed sequence satisfies 
$\C_{x^{-1},\frac{d}{x}} = \C^*_{x,d}$.

\subsection{Main result}
We now investigate well-definedness, boundedness, and compactness of 
$\H$. By well-definedness we mean the property that 
$(\H f)_n < \infty$ for any sequence $(f)_n \in \ell^2$ and any index $n \in \N$. 

For later use we state a useful inequality valid for $x,y \geq 0$: 
\begin{equation}\label{useful}
\frac{1}{2} \left(  \frac{1}{x} \chi_{y < x}(x,y) +   
        \frac{1}{y} \chi_{y \geq  x}(x,y) \right) \leq 
\frac{1}{x + y} \leq 
     \frac{1}{x} \chi_{y < x}(x,y) +   
        \frac{1}{y} \chi_{y \geq  x}(x,y),         
\end{equation}        
where $\chi_A$ is the indicator function of the set $A$. This inequality 
follows easily from 
\[ \max\{x,y\} \leq (x+y) \leq 2\max\{x,y\}. \]
For the main theorem we need  the following functions: 
Given the sequences $d_i,x_i$, we define for $x \geq 0$
\begin{align}\label{defNM} 
 N(x) := \sum_{x_j \leq x} {d_j^2},   \qquad 
 M(x) :=  \sum_{x_j \geq  x} \tfrac{d_j^2}{x_j^2}. 
\end{align}
For a $K \in \N$, we define discrete analogues of it:
\begin{align}\label{defNMdis} 
 N^K(x) := \sum_{\substack{ x_j \leq x \\  1 \leq j \leq K }} {d_j^2},   \qquad 
 M^K(x) :=  \sum_{\substack{ x_j \geq  x\\  1 \leq j \leq K}} \tfrac{d_j^2}{x_j^2}. 
\end{align}
It is clear that $N^K(x),$ $M^K(x)$ is monotonically increasing in $K$,  
that $N^K(x),$ $N(x)$ are increasing in $x$, and that 
$M^K(x),M(x)$ are decreasing in $x$, and we have for any~$x$ 
\[ \lim_{K\to\infty} N^K(x) = N(x) \qquad 
 \lim_{K\to\infty} M^K(x) = M(x). 
\]
 Note that 
\begin{alignat}{2} N^K(x) &= 0 \quad \text{ for } x < \min_{1 \leq i \leq K} x_i,& 
 \qquad
 N^K(x) &\leq N^K(\max_{1 \leq i \leq K} x_i),  \label{upperN}  \\ 
 M^K(x) &= 0 \quad \text{ for } x  > \max_{1 \leq i \leq K} x_i, &\qquad 
 M^K(x) &\leq M^K(\min_{1 \leq i\leq K} x_i).  
\end{alignat} 

We note that the definitions of $M$, $N$  are 
invariant under rearrangements of $x_i$. Moreover, such rearrangements 
do not alter the analytic properties of $\H$ (boundedness and compactness) 
since rearrangements just induce an unitary operator on $\ell^2$.

For later use we need the  following definition:
\begin{definition}
A sequence $x_i >0$ with disjoint elements is monotonically rearrangeable (m.r.) if 
there is a rearrangement of indices (i.e., a bijection $\sigma:\N \to \N$) 
such that 
\[ x_{\sigma(1)} < x_{\sigma(2)} < \ldots \]
or 
\[ x_{\sigma(1)} > x_{\sigma(2)} >\ldots \]
\end{definition} 
Obviously, a strictly increasing sequence is (m.r.), 
and a subsequence of a (m.r.) sequences is also (m.r.).

We are now in the position to state the main result.
The analytic properties of $\H$ can now be characterized
as follows:
\begin{theorem}\label{mainth}
For $d_i,x_i$ as stated above, define for $L\in \N$
  \begin{equation}\label{btdef} 
       \Bt(L) :=      \sup_{L\leq K \in \N}\left\{N(\max_{L\leq i\leq K} x_i)
  M(\min_{L\leq K \leq  i} x_i) \right\}  + \sup_{L\leq k \in \N} N(x_k) M(x_k) .
  \end{equation}
Then the operator $\H:\ell^2 \to \ell^2$ is 
\begin{enumerate}
 \item well-defined if and only if 
 \begin{equation}\label{well} \max\{N(x_n), M(x_n)\} < \infty \qquad \forall n \in \N; 
 \end{equation}
 \item well-defined and bounded  if  
  \begin{equation}\label{second} 
  \B(1) < \infty; 
  \end{equation}
  \item compact if it is   bounded and
  \begin{equation}\label{third}  
    \lim_{\substack{L\to \infty}} \Bt(L)  = 0. 
  \end{equation}
\end{enumerate}   
   In case that $x_i$ is monotonically rearrangeable, 
   we have that $\H$ is 
   \begin{enumerate}
     \item[2a.]  well-defined and bounded if and only if   
   \begin{equation} \label{secondb} \sup_{k\in \N} N(x_k) M(x_k) < \infty; 
   \end{equation}
     \item[3a.] compact if and only if it is bounded and 
       \begin{equation}\label{thirdb} \lim_{\substack{L\to \infty}} 
        \sup_{k\geq L } N(x_k) M(x_k) \to 0 . 
  \end{equation}
  \end{enumerate} 
\end{theorem}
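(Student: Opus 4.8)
The plan is to reduce everything to the Ces\`{a}ro--Hardy operator $\C$ of \eqref{ces} by means of the elementary comparison \eqref{useful}. Writing $\tilde{\H}$ for the matrix with the split kernel $\tilde{\H}_{i,j}=d_i d_j\bigl(\tfrac{1}{x_i}\chi_{x_j<x_i}+\tfrac{1}{x_j}\chi_{x_j\geq x_i}\bigr)$, inequality \eqref{useful} gives $\tfrac12\tilde{\H}_{i,j}\leq \H_{i,j}\leq \tilde{\H}_{i,j}$ entrywise. Since both matrices have nonnegative entries, the pointwise bound $|\langle \H f,g\rangle|\leq \langle \tilde{\H}|f|,|g|\rangle$ shows $\tfrac12\|\tilde{\H}\|\leq\|\H\|\leq\|\tilde{\H}\|$, and the same comparison transfers compactness in both directions; hence $\H$ is bounded (compact) if and only if $\tilde{\H}$ is. A direct computation, keeping track of the diagonal term $j=i$ that \eqref{ces}--\eqref{cesad} count twice, yields the identity $\tilde{\H}=\C+\C^*-D$ with $D$ the diagonal operator $D_{i,i}=d_i^2/x_i$. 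Because $N(x_i)M(x_i)\geq d_i^4/x_i^2$, the diagonal $D$ is bounded whenever the relevant Muckenhoupt quantity is finite, so the whole theorem reduces to the mapping properties of $\C$.

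For part 1 I would argue directly: well-definedness at index $n$ means exactly that the $n$-th row lies in $\ell^2$, i.e. $\sum_j \H_{n,j}^2<\infty$. Squaring \eqref{useful} (the two indicators are complementary, so the cross term drops) gives
\[
\sum_j \H_{n,j}^2 \;\asymp\; d_n^2\Bigl(\tfrac{1}{x_n^2}\sum_{x_j<x_n}d_j^2+\sum_{x_j\geq x_n}\tfrac{d_j^2}{x_j^2}\Bigr)
= d_n^2\Bigl(\tfrac{N(x_n)-d_n^2}{x_n^2}+M(x_n)\Bigr),
\]
which is finite precisely when $N(x_n)$ and $M(x_n)$ are, giving \eqref{well}.

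For the monotonically rearrangeable case I would first apply the unitary rearrangement (which alters neither boundedness nor compactness) so that $x_1<x_2<\cdots$. Then $\{x_k\leq x_n\}=\{k\leq n\}$ and $\C$ becomes the lower-triangular weighted Hardy operator $(\C f)_n=\tfrac{d_n}{x_n}\sum_{k\leq n}d_k f_k$. The discrete weighted Hardy inequality characterizes its $\ell^2$-boundedness by the Muckenhoupt condition $\sup_n\bigl(\sum_{k\leq n}d_k^2\bigr)\bigl(\sum_{k\geq n}d_k^2/x_k^2\bigr)=\sup_n N(x_n)M(x_n)<\infty$, which is \eqref{secondb} and hence 2a. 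For 3a I would combine boundedness with the vanishing of the Muckenhoupt tail: sufficiency by splitting $\C$ into its finite section on indices $<L$ (finite rank) and a tail whose norm is controlled by $\sup_{k\geq L}N(x_k)M(x_k)$ through the same Hardy estimate; necessity by feeding $\C$ normalized, finitely supported test vectors concentrated at large indices, for which compactness forces $N(x_k)M(x_k)\to0$.

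In the general case monotone ordering is unavailable, so $\C$ is no longer triangular and the Hardy characterization yields only one implication; this is the source of the remaining difficulty and the reason parts 2 and 3 are stated as sufficient conditions only. The quantities $\B(1)$ and $\Bt(L)$ are built precisely to furnish an upper bound for $\|\H\|$ and for the tail norm $\|R_L\|$, where $R_L$ denotes the block of $\H$ on indices $i,j\geq L$: the first summand bounds the contribution of an index block $[L,K]$ by the product $N(\max_{L\leq i\leq K}x_i)\,M(\min_{L\leq i\leq K}x_i)$, while $\sup_{k\geq L}N(x_k)M(x_k)$ controls the diagonal part. Sufficiency of $\B(1)<\infty$ for boundedness then follows from a Schur test on $\tilde{\H}$ with a weight adapted to the distribution of the $x_i$, and $\Bt(L)\to0$ gives compactness since $\H-R_L$ has finite rank (all rows and columns through index $L$ lie in $\ell^2$ by part 1) while $\|R_L\|\leq C\,\Bt(L)\to0$. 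I expect the main obstacle to be the sharp two-sided control of $\|\C\|$ by the Muckenhoupt quantity---the constants in the discrete weighted Hardy inequality---and, in the non-rearrangeable case, verifying that the particular block-plus-diagonal combination defining $\B$ and $\Bt$ is genuinely an upper bound for the operator and tail norms.
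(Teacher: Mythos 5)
Your overall architecture matches the paper's: the reduction of $\H$ to the Ces\`{a}ro--Hardy operator via \eqref{useful}, the Muckenhoupt-type quantities $N(x_k)M(x_k)$, and compactness via vanishing of the tail blocks are exactly the devices used there, and your treatment of part 1 (row-wise Cauchy--Schwarz plus the split \eqref{useful}) is the paper's argument essentially verbatim. Your identity $\tilde{\H}=\C+\C^*-D$ is correct and is a slightly cleaner bookkeeping of the paper's inequality \eqref{xyz}.

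The genuine gap is that the single hardest step is asserted rather than proved: the bound $\|\C\|^2\lesssim \Bt(1)$ for an \emph{unordered} sequence $x_i$. You defer it to ``the discrete weighted Hardy inequality'' (which, as usually stated, covers only the triangular, i.e.\ monotone, case and hence only parts 2a/3a) and, in the general case, to ``a Schur test with a weight adapted to the distribution of the $x_i$'' without exhibiting the weight. The paper's actual content here is the telescoping lemma (inequality \eqref{one}) applied to the truncated functions $N^K,M^K$, followed by a Cauchy--Schwarz with weight $N^K(x_k)^{1/4}/d_k$ and a careful split of the outer sum into $n\le K$ and $n>K$; it is precisely this last split --- forced by the fact that the index order and the order of the $x_i$ need not agree --- that produces the extra first summand in \eqref{btdef}, which your heuristic reading of $\Bt(L)$ as a ``block bound'' does not account for. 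Without this estimate neither part 2 nor the tail bound $\|R_L\|\le C\,\Bt(L)$ invoked in your part 3 is established. Two smaller points. First, your claim that entrywise domination between $\H$ and $\tilde{\H}$ ``transfers compactness in both directions'' is true on $\ell^2$ but is a nontrivial Dodds--Fremlin-type fact; the paper avoids it by proving compactness directly from the norm decay of the tail blocks of $L$ with $\H=LL^*$, and you should either do the same or justify the domination principle. Second, for the necessity in 2a/3a you need explicit test sequences: the paper uses $f_j=\chi_{x_j\ge x_0}d_j/x_j$ and $g_j=\chi_{x_j\le x_0}d_j$, for which $(\H f,g)\ge\frac12 N(x_0)M(x_0)$ while $\|f\|\,\|g\|=N(x_0)^{1/2}M(x_0)^{1/2}$; ``feeding normalized test vectors concentrated at large indices'' is the right idea but is not yet a proof.
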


Before proving this theorem, let us briefly illustrate the results. In the standard case, for the 
usual Hilbert matrix $d_i = 1,$ $x_i = i -\frac{1}{2}$, and $x_i$ is rearrangeable since it is 
monotonically increasing. 
We have 
that $N(x) \sim x$ and $M(x) \sim \int_x^\infty \frac{1}{s^2} ds \sim x^{-1}$
and thus $N(x) M(x) \sim 1$. 
Thus, we get the well-known result that the Hilbert matrix in the standard setting 
is well-defined and bounded but not compact by \eqref{thirdb}.

Using the setting $x_i = i-\frac{1}{2}$, $d_i = \frac{1}{i}$ as in \cite{KiHo24}, 
leads to $N(x) \sim \int_1^x \frac{1}{s^2} ds \sim O(1)$, while 
$M(x) \sim \int_x^\infty \frac{1}{s^4} ds \sim \frac{1}{x^3}$. 
In this case, $N(x) M(x) \sim \frac{1}{x^3}$ for $x>1$, 
and for $x = x_L =L- \frac{1}{2}$, this expression tends to $0$ as $L\to \infty$,
and \eqref{thirdb} holds. 
Hence, this Hilbert matrix  operator is compact,  as   also  shown in \cite{KiHo24}.

Furthermore,  we  should state that these results are 
probably partially  known  (or at least expectable) to experts, and we do not claim full originality of 
central parts of the proofs. 
In particular, the method of applying the Ces\`{a}ro-Hardy operator 
for proving boundedness is the celebrated idea of Hardy~\cite{Polya}. The 
condition with $N(x) M(x)$ are the famous Muckenhoupt condition for 
the boundedness of the weighted Hardy operator, and our boundedness result is (probably) 
partially contained in  \cite{Muck}, but the translation of the 
proofs in \cite{Muck} to the sequence space was not obvious to the author. 
Note that such was done in \cite{Miclo}, 
and our proof of item 2.)  follows that in  parts.
However, since we consider infinite sequences $x_i$ that are not necessarily 
ordered, it was not completely clear if the steps in \cite{Miclo} apply to this case, 
so   we provide a selfcontained elaboration.

Moreover, let us mention that a main source of inspiration for this 
work is that of Ushakova \cite{Usha13,StepUsha04,Usha11,Usha13} and also the references therein. 
In~\cite{Usha13}  similar
boundedness and compactness condition were
derived for the Laplace transform in weighted $L^p$-spaces,
which can be considered the 
continuous analogue of our $\ell^2$-setting. 
We note that  the results in our article most probably can be extended 
to $\ell^p$-spaces with $p\in (1,\infty)$, analogous as in  
\cite{Usha13} for the $L^p$-case, 
 but we do not consider this here.

\subsection{Proof of well-posedness} 
We are concerned with the well-posedness of $\H$.
Take a sequence $(a_n)_n \in \ell^2$. We use the 
Cauchy-Schwarz estimate 
\begin{align*}  
 |(\H a)_n|^2 = |\sum_{j=1}^\infty \frac{d_n d_j}{x_n + x_j} a_j |^2 \leq 
  d_n^2 \sum_{j=1}^\infty \frac{d_j^2}{(x_n + x_j)^2} \|a\|_{\ell^2}^2.   
\end{align*}
Since we may take a sequence where the Cauchy-Schwarz inequality is sharp, 
$a_k \sim \frac{d_k}{x_n + x_k}$, we have that $\H$ is well-defined if and 
only if 
\[ \sum_{j=1}^\infty \frac{d_j^2}{(x_n + x_j)^2} < \infty \qquad \forall n \in \N. \]
By \eqref{useful}, we have 
\begin{align}\label{seconduseful}
   \frac{1}{4} \left(\frac{1}{x_n^2} \sum_{x_j < x_n}  d_j^2 + 
   \sum_{x_j \geq  x_n}  \frac{d_j^2}{x_j^2}  \right)    \leq 
   \sum_j \frac{d_j^2}{(x_n + x_j)^2}  \leq 
  \frac{1}{x_n^2} \sum_{x_j < x_n}  d_j^2 + 
   \sum_{x_j \geq  x_n}  \frac{d_j^2}{x_j^2}  .
\end{align}
Hence, necessary and sufficient for convergence of the sum is 
\[  \frac{1}{x_n^2} \sum_{x_j < x_n}  d_j^2 < \infty \qquad \mbox{ and }  \qquad  
   \sum_{x_j \geq  x_n}  \frac{d_j^2}{x_j^2}   < \infty \qquad \forall n,  
\]
and an equivalent condition is obtained by replacing the first sum over 
$x_j \leq x_n$ as this only involve the additional term $d_n^2$. 
Thus, in terms of the function \eqref{defNM}, this means that 
 $N(x_n)$ and $M(x_n)$ are finite at any point, 
which is exactly the stated condition \eqref{well}.

\subsection{Proof of boundedness}
For the proof of boundedness in item 2.) (condition \eqref{second}) 
and  in item 2a.) (condition \eqref{secondb}) 
we require the next lemma, cf.~\cite[p.~241]{Polya},
which is  valid for finite sequences:

\begin{lemma}
For a finite sequence 
$(x_i)_{i=1}^K$, corresponding positive weights $w_k >0$, and an arbitrary $x\geq0$,
  we have the inequality 
\begin{align}  
\sum_{ x_n \leq x} \frac{w_n}{\sqrt{\sum_{x_j \leq x_n}   w_j }} 
 &\leq 2 \sqrt{\sum_{x_j \leq x}   w_j }. \label{one} 
\end{align} 
\end{lemma}
\begin{proof}
We note that the inequality only involves finitely many $x_i \leq x$, $1 \leq i \leq K$,
and the left- and 
right-hand side are invariant under rearrangements of these. Thus, 
we may assume now that these are monotonically increasing 
\[ x_1 < x_2 < \ldots < x_L \leq x  < \min_{L < i \leq  K} x_i . \] 
Let  $b\geq a \geq 0$, $b>0$. Then, 
\begin{align*}
\sqrt{b} - \sqrt{a} = 
\frac{b-a}{\sqrt{b} + \sqrt{a}} \geq 
  \frac{b - a}{2 \sqrt{b}} .
\end{align*}
For $x_n$, $2 \leq n \leq L$, taking 
$b = \sum_{x_j\leq x_n} w_j$, $a = \sum_{x_j\leq x_{n-1}} w_j$ yields 
\[ \sqrt{\sum_{x_j\leq x_n} w_j } - 
 \sqrt{\sum_{x_j\leq x_{n-1}} w_j}  \geq 
 \frac{ w_n}{2  \sqrt{\sum_{x_j\leq x_n} w_j }}. 
\]
Summing this identity over $n=2,\ldots,L$ yields a telescope sum, hence 
\[  \sqrt{\sum_{x_j\leq x_L} w_j } - 
\sqrt{\sum_{x_j\leq x_1} w_j } \geq 
\frac{1}{2}  \sum_{n=2}^{L} \frac{ w_n}{ \sqrt{\sum_{x_j\leq x_n} w_j }}.
\]
Adding the following  term corresponding to that in the sum  with $n = 1$,  
\[ \frac{1}{2}  \frac{ w_1}{ \sqrt{\sum_{x_j\leq x_1} w_j }} = 
\frac{\sqrt{w_1}}{2} =  \frac{1}{2} \sqrt{\sum_{x_j\leq x_1} w_j }\]
yields 
\begin{align*}
 \sqrt{\sum_{x_j\leq x} w_j } 
=   \sqrt{\sum_{x_j\leq x_L} w_j }
 \geq 
 \sqrt{\sum_{x_j\leq x_L} w_j } - \sqrt{\sum_{x_j\leq x_1} w_j } +
\frac{1}{2} \sqrt{\sum_{x_j\leq x_1} w_j } \\
 \geq 
\frac{1}{2}  \sum_{n=1}^{L} \frac{ w_n}{ \sqrt{\sum_{x_j\leq x_n} w_j }}
= \frac{1}{2}  \sum_{x_n \leq x} \frac{ w_n}{ \sqrt{\sum_{x_j\leq x_n} w_j }},
\end{align*}
which completes the proof.
\end{proof}

Applying this lemma to $N^K(x_n)$ (with $w_j = d_j^2$)  and 
$M^K(x_n)$ (with $w_j = \frac{d_j^2}{x_j^2}$)
we have for arbitrary $x\geq 0$,
\begin{align} \sum_{\substack{x_k \leq x\\1 \leq k \leq K}} 
\frac{d_k}{N^K(x_k)^\frac{1}{2}}   
\leq 2 N^K(x)^\frac{1}{2} , \label{MNest} \\ 
\sum_{\substack{x_n \geq x \\1 \leq n \leq K}} \frac{d_n^2}{x_n^2}M^K(x_n)^\frac{1}{2}   
\leq 2  M^K(x)^\frac{1}{2}.  \label{MMest}
\end{align}
The last inequality is obtained by transforming the finite sequence $(x_i)_{i=1}^K$ 
to $(x_i^{-1})_{i=1}^K$ and applying the lemma then.


We continue with the proof of item 2) by 
showing that \eqref{second} implies boundedness of $\H$.

Let $f,g$ be sequences in $\ell^2$. We denote by $|f|$, $|g|$ the respective sequences
with absolute value applied 
componentwise. 
By \eqref{useful} and 
the definition of \eqref{ces}, \eqref{cesad}, 
it follows that
\begin{equation}\label{xyz} 
 \begin{split}
 (\H f,g) &\leq 
 \sum_{n,m} \frac{|f_n| |g_m| d_n d_m}{x_n + x_m}  
\leq 
  \sum_{x_m \leq x_n} \frac{|f_n| |g_m| d_n d_m}{x_n} +
   \sum_{x_m \geq  x_n} \frac{|f_n| |g_m| d_n d_m}{x_m} \\
  &=  (|g|, \C^* |f|)_{\ell^2} + 
  (\C^* |g|,|f|)_{\ell^2}   \leq 
      \big\| \, \C |g| \, \big\|_{\ell^2}\|f\|_{\ell^2} +
    \big\| \,\C |f| \,   \big\|_{\ell^2}\|g\|_{\ell^2}. 
\end{split}
\end{equation}
Let us estimate the norm of $\C f$ for $f$ being a finite sequence,
$f_i = 0$ for $i >K$.
\begin{align} \|\C f\|^2 &= 
  \sum_{n=1}^\infty \frac{d_n^2}{x_n^2} 
 \left(\sum_{\substack{x_k \leq x_n\\ k\leq K}} d_k f_k \right)^2 \label{first} \\
& =   \sum_{n=1}^\infty \frac{d_n^2}{x_n^2} 
 \left(\sum_{\substack{x_k \leq x_n\\ k\leq K}} d_k f_k \frac{N^K(x_k)^\frac{1}{4}}{d_k}  
 \frac{d_k} {N^K(x_k)^\frac{1}{4}}
 \right)^2    \notag \\
&\leq_{\text{C.S. ineq.}}    \sum_{n=1}^\infty \frac{d_n^2}{x_n^2} 
 \left(\sum_{\substack{x_k \leq x_n\\ k\leq K}} d_k^2 f_k^2 \frac{N^K(x_k)^\frac{1}{2}}{d_k^2}  \right)
 \left( \sum_{\substack{x_j \leq x_n\\ j\leq K}}  \frac{d_j^2} {N^K(x_j)^\frac{1}{2}}
 \right) \notag   \\
& \leq_{\text{ by } \eqref{MNest}} 2 \sum_{n=1}^\infty \frac{d_n^2}{x_n^2} 
 \left(\sum_{\substack{x_k \leq x_n\\ k\leq K}}   f_k^2 {N^K(x_k)^\frac{1}{2}}  \right)
 N^K(x_n)^\frac{1}{2} \\
& = 2 \sum_{k=1}^\infty \sum_{n=1}^\infty\frac{d_n^2}{x_n^2} 
\chi_{\left\{\substack{x_k \leq x_n\\k\leq K}\right\}} 
f_k^2 {N^K(x_k)^\frac{1}{2}}   N^K(x_n)^\frac{1}{2} \notag  \\
&\leq  2 \sum_{k=1}^K 
f_k^2 {N^K(x_k)^\frac{1}{2}}
\left(\sum_{x_k \leq x_n}^\infty\frac{d_n^2}{x_n^2} 
   N^K(x_n)^\frac{1}{2} \right).   \label{herin}
\end{align} 
Recall the definition of $\Bt(1)$ in \eqref{btdef} and its use in \eqref{second}.
We have that 
\[ M^K(x_n) N^K(x_n) \leq 
 M(x_n) N(x_n) \leq \Bt(1). 
\]
Now we bound the term in brackets in \eqref{herin}.
Note that for $x_n$ with $n \leq K$, 
we have that $M^K(x_n)$ is nonzero. 
\begin{align}
&\sum_{x_k \leq x_n}^\infty\frac{d_n^2}{x_n^2} 
   N^K(x_n)^\frac{1}{2} 
   = 
\sum_{\left\{ n:\substack{x_k \leq x_n \\ n \leq K }\right\} } \frac{d_n^2}{x_n^2}
  N^K(x_n)^\frac{1}{2}  + 
  \sum_{\left\{n: \substack{x_k \leq x_n \\ n > K } \right\}}^\infty\frac{d_n^2}{x_n^2}
  N^K(x_n)^\frac{1}{2} \label{herinin}
  \\
& \leq_{\eqref{upperN}}  \sum_{\left\{ n:\substack{x_k \leq x_n \\ n \leq K }\right\} }\frac{d_n^2}{x_n^2}
  \frac{M^K(x_n)^\frac{1}{2} N^K(x_n)^\frac{1}{2}}{M^K(x_n)^\frac{1}{2}}  + 
  N^K(\max_{i\leq K} x_i)^\frac{1}{2} \sum_{\left\{n: \substack{x_k \leq x_n \\ n > K } \right\}}^\infty\frac{d_n^2}{x_n^2} \notag\\
& \leq  \Bt(1)^\frac{1}{2}  
\sum_{\left\{ n:\substack{x_k \leq x_n \\ n \leq K }\right\} }
\frac{\frac{d_n^2}{x_n^2}}{M^K(x_n)^\frac{1}{2}} + 
  N^K(\max_{i\leq K} x_i)^\frac{1}{2} \sum_{n > K }^\infty\frac{d_n^2}{x_n^2}    
  \\
  & \leq_{\eqref{MMest}} 
  2 \Bt(1)^\frac{1}{2}  M^K(x_k)^\frac{1}{2} +   N(\max_{i\leq K} x_i)^\frac{1}{2} M(\min_{n >K} x_n).
\end{align}
In total, 
\begin{align}\label{aaa}
  \|\C f\|^2  & \leq 2 \sum_{k=1}^K  f_k^2
  \left( 
   2  \Bt(1)^\frac{1}{2}  M^K(x_k)^\frac{1}{2} 
   {N^K(x_k)^\frac{1}{2}}+   N(\max_{i\leq K} x_i)^\frac{1}{2} M(\min_{n >K} x_n) \right), 
\end{align}
and since $ M^K(x_k)^\frac{1}{2}   N^K(x_k)^\frac{1}{2} \leq 
 M(x_k)^\frac{1}{2} N(x_k)^\frac{1}{2} \leq \Bt(1)$,
 and the definition of $\Bt(1)$, 
this shows that $\C f$ is uniformly bounded on finitely supported $f$. 
Let us denote by 
\mbox{$P_M:\ell^2 \to \ell^2$} 
 the projector onto the subspace where only the first $M$ components of a sequence
are nonzero, i.e., 
\begin{equation}\label{prj} (P_M f)_{n=1}^\infty = (f_1,\dots, f_M,0,0,\ldots).   \end{equation} 
Then the just proven result means that 
$ \| C P_M \|$ is uniformly bounded (by $3\Bt(1)^\frac{1}{2}$).
Moreover, since $ P_M f = f$ for sufficiently large $M$ on 
such finitely supported $f$, it follows that 
$\lim_{M\to \infty} C P_M f = C f$ . By the Banach Steinhaus 
principle this means that $\|C\|$ is bounded 
with the bounds proportional to $\Bt(1)$.

It is obvious that for $x_i$ increasing, we have 
$\max_{i\leq K} x_i = x_K$ and $\min_{i\geq K} x_i = x_K$, 
such that the condition \eqref{secondb} is equivalent to \eqref{second} in case 
that $x_i$ is monotonically increasingly rearrangeable.
In case $x_i$ being decreasing, we transform the problem as stated above 
in \eqref{trans}  using  the  increasing sequence $x_i^{-1}$. 
Thus, since rearrangments do not change the norm of $\H$, 
we get boundedness under condition~\eqref{secondb}.

Now consider the necessity of \eqref{secondb} in the increasing 
case. Assume that 
$x_i$ is increasing and that $\H$ is bounded and well-defined,
and we want to deduce that~\eqref{secondb} holds.

Take, for some $x_0 \in \{x_i: \, i \in \N\}$,
the sequences $f,g$ defined by 
\[ f_j := \chi_{x_j \geq x_0} \frac{d_j}{x_j},  \qquad 
g_j:= \chi_{x_j \leq x_0} d_j.  \]
Then, 
\[ \|f\|_{\ell^2}^2 = M(x_0), \qquad \|g\|_{\ell^2}^2 = N(x_0). \]
By well-definedness both norms are finite.  
The sequences contain nonnegative numbers, and by \eqref{useful}, we obtain that 
\[ \frac{\chi_{x_m < x_n} d_m d_n f_m 
 g_n }{x_n} + 
  \frac{\chi_{x_n \leq x_m} d_m d_n f_m 
 g_n }{x_m}  \leq 2 \frac{d_m d_n f_m g_n}{x_m + x_n}. \]
A summation over $m$ and $n$ yields 
\[ 2 (\H f,g) \geq 
\sum_{m,n} \frac{d_m^2 d_n^2}{x_m x_n} \chi_{x_m < x_n} \chi_{x_m \geq x_0}
 \chi_{x_n \leq x_0}+ 
\sum_{m,n} \frac{d_m^2 d_n^2}{x_m^2} \chi_{x_m \geq  x_n} \chi_{x_m \geq x_0}
 \chi_{x_n \leq x_0}.
 \]
The first sum on the right-hand side 
is empty since the inequalities for $x_m,x_n,x_0$ are contradictory. 
In the second sum, the condition $x_m \geq  x_n$ is implied by the other ones, thus 
\begin{align*} 2 (\H f,g) &\geq
2 \sum_{m,n} \frac{d_m^2}{x_m^2} d_n^2 \chi_{x_m \geq x_0} \chi_{x_n \leq x_0}
= \left( \sum_{m\geq x_0} \frac{d_m^2}{x_m^2} \right) 
\left(\sum_{n\leq x_0} d_n^2 \right) \\
&= M(x_0) N(x_0). 
\end{align*}
By assumption, $\H$ is bounded, thus 
\[M(x_0) N(x_0) \leq  2 (\H f,g) \leq 2 \|\H\| \|f\|\|g\| =  2 \|\H\| 
M(x_0)^\frac{1}{2} N(x_0)^\frac{1}{2}. \]
Combining the inequalities and taking $x_0 = x_n$ arbitrary yields that 
\begin{equation}\label{bound}  M(x_n) N(x_n) \leq (2 \|\H\|)^2, \end{equation}  
 and hence, \eqref{second} holds.

\subsection{Proof of compactness}
Let us continue with the proof of compactness under 
condition \eqref{third}. 

Assume that $\H$ is bounded and \eqref{third} holds true. 
 Let $P_\myN$ be the 
$\ell^2$-projector onto the first $\myN$ coefficients as in \eqref{prj}
and consider the Laplace transform \eqref{lap}. 
Since $\H = L L^*$, $L$ is bounded and so is 
the operator $P_\myN L$. The range of this operator involves only the 
first $\myN$ components of a sequence and is hence finite-dimensional, 
and therefore $P_\myN L$ is compact. 
Consider 
\[ Q_\myN:=  P_\myN L- L \quad \text{and} \quad \H_{\myN}:= Q_\myN Q_\myN^* .\]
The operator  $Q_\myN$ and hence $\H_{\myN}$ only involves 
the subsequences $(x_i)_{i >K}$, $(d_i)_{i>K}$, and a 
easy calculation shows that, as an infinite matrix, 
\[ \H_{\myN} = \begin{pmatrix} 0 & 0 \\ 0 & (\H_{ i,j})_{i,j>K} \end{pmatrix} \]
i.e., it equals $\H$ but with $i\leq K$ or $j \leq K$ set to $0$. 
The norm of $\H_{\myN}$ equals the norm of the submatrix 
$(\H_{ i,j})_{i,j>K}$, which is a generalized Hilbert matrix such that 
the norm bound of the previous section applies,
\begin{align}\label{thisesti} \|\H_{\myN}\| \leq 
\Bt(K). 
\end{align} 
By condition \eqref{third}, this expression tend to $0$, hence 
     $\lim_{\myN \to \infty} \|P_\myN L- L\| = 0$, and since 
     the uniform limit of a compact operator is compact, $L$ and 
     hence $\H = L L^*$ must be compact, too. 

For the converse, assume that $x_n$ is monotonically (with loss of generality) 
increasing, and let $\H$ be bounded  and compact. 
The projection operator $P_\myN$ converges pointwisely to the identity 
as $\myN \to \infty$. 
It is  a well-known result that 
 if $L$ is a compact operator 
 then $\|(P_\myN -I)L\| \to 0$ 
(see, e.g.,~\cite[Cor. 10.5]{kress}, \cite{engl}). 
If $\H$ is compact, so is $L$. Thus, 
$ \|(P_\myN -I)L\| \to 0$, as $\myN\to \infty$,
and by the converse result \eqref{bound},
\[ \|(P_\myN -I)L\|^2 = \|Q_{\myN} Q_{\myN}^*\| = \|\H_{\myN}\| 
\geq C  \sup_{i\geq K } N(x_i) M(x_i). \]
Since the left-hand side tends to $0$, this yields \eqref{thirdb}.

\qed

The results of Theorem~\ref{mainth} immediately transfer to  the other related operators. 
\begin{theorem}
The statements on boundedness and compactness, items 2.) 3) 2a), 3a.) remain 
valid when $\H$ is replaced by $A$, $A^*$, $A^*A$, $L$, $L^*$, $L^*L$,
$\C$, $\C^*$, $\C \C^*$, $\C^*\C$. 
\end{theorem}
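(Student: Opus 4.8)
The plan is to reduce every operator in the list to the generalized Hilbert matrix $\H$, exploiting two structural facts: the algebraic identities $\H = AA^* = LL^*$ together with the standard stability of boundedness and compactness under passage to adjoints and to the associated normal operators, and, for the Ces\`{a}ro--Hardy operator, the entrywise positivity estimates already recorded in \eqref{useful} and \eqref{xyz}. The elementary Hilbert-space facts I will use throughout are: for any bounded operator $T$ between Hilbert spaces, $\|T\| = \|T^*\| = \|T^*T\|^{\frac12} = \|TT^*\|^{\frac12}$, and $T$ is compact if and only if any one (hence each) of $T^*$, $T^*T$, $TT^*$ is compact. I will also use the finite-rank approximation criterion already invoked in the compactness proof of Theorem~\ref{mainth}: since $P_L \to I$ strongly and $P_L T$ is finite-rank for every $L$, an operator $T$ is compact if and only if $\|(I-P_L)T\| \to 0$ as $L \to \infty$.

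For the families generated by $A$ and $L$ the statement is then immediate. By \eqref{normallap} and the corresponding identity for $A$ we have $\H = LL^*$ (with $x_i = \lambda_i$) and $\H = AA^*$ (with $x_i = \lambda_i + \frac12$). Hence $\|L\|^2 = \|L^*\|^2 = \|L^*L\| = \|LL^*\| = \|\H\|$, so each of $L, L^*, L^*L$ is bounded precisely when $\H$ is, and similarly for $A, A^*, A^*A$; moreover each of these operators is compact precisely when $\H$ is, by the compactness equivalences above. Since Theorem~\ref{mainth} characterizes boundedness and compactness of $\H$ through \eqref{second}, \eqref{third}, \eqref{secondb}, \eqref{thirdb}, items 2.), 3.), 2a.), 3a.) transfer verbatim to these six operators.

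The only genuine work concerns the Ces\`{a}ro--Hardy family, since $\C$ is not self-adjoint and $\C\C^*$ is not $\H$. Here I would first establish the two-sided norm equivalence $\tfrac12\|\C\| \le \|\H\| \le 2\|\C\|$. The upper bound is exactly \eqref{xyz}, which gives $(\H f,g) \le (\C|g|,|f|) + (\C|f|,|g|) \le 2\|\C\|\,\|f\|\,\|g\|$. For the lower bound, the left inequality in \eqref{useful} yields the entrywise domination $0 \le \C_{n,m} \le 2\,\H_{n,m}$; since both matrices have nonnegative entries, replacing $f,g$ by $|f|,|g|$ shows $\|\C\| = \sup_{f,g\ge 0}(\C f,g) \le 2\sup_{f,g\ge 0}(\H f,g) = 2\|\H\|$. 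This proves that $\C$ (and hence, by the adjoint and normal-operator equivalences, each of $\C^*$, $\C^*\C$, $\C\C^*$) is bounded exactly when $\H$ is, giving items 2.) and 2a.). For compactness I would combine the finite-rank criterion with the same positivity: $(I-P_L)\C \le 2(I-P_L)\H$ entrywise, so $\|(I-P_L)\C\| \le 2\|(I-P_L)\H\|$, and if $\H$ is compact the right-hand side tends to $0$, whence $\C$ is compact. For the converse I would use the companion entrywise bound $\H_{n,m} \le (\C + \C^*)_{n,m}$, again from \eqref{useful}, to obtain $\|(I-P_L)\H\| \le \|(I-P_L)\C\| + \|(I-P_L)\C^*\|$, which tends to $0$ once $\C$ (and thus $\C^*$) is compact. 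This yields items 3.) and 3a.) for the whole Ces\`{a}ro--Hardy family.

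The main obstacle is precisely this last transfer of compactness to $\C$: unlike for $A$ and $L$, there is no normal-operator identity linking $\C\C^*$ to $\H$, so the clean Hilbert-space equivalences do not apply directly. The device that circumvents this is the nonnegativity of all matrix entries, which upgrades the entrywise estimates of \eqref{useful} to operator-norm estimates on the positive cone and, applied to the truncations $(I-P_L)\,\cdot\,$, to a comparison of the compactness defects of $\C$ and $\H$. A minor point to verify along the way is that all objects are genuinely well-defined under \eqref{second}/\eqref{secondb}, so that the manipulations with $\C^*$ and the adjoint identities are legitimate; this follows from the well-definedness condition \eqref{well} of Theorem~\ref{mainth}, itself guaranteed by the finiteness of $N(x_n)$ and $M(x_n)$.
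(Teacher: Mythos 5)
Your proposal is correct and follows essentially the same route as the paper: the $A$, $L$ families are handled by the identities $\H=AA^*=LL^*$ and the standard adjoint/normal-operator equivalences, and the Ces\`{a}ro--Hardy family by the positivity comparisons coming from \eqref{useful}/\eqref{xyz} together with the truncation criterion $\|(I-P_K)\,\cdot\,\|\to 0$. The only (cosmetic) difference is that for the forward boundedness of $\C$ the paper cites the explicit bound $\|\C\|\leq C\,\Bt(1)$ already established in \eqref{aaa}, whereas you derive $\|\C\|\leq 2\|\H\|$ from the entrywise domination $\C_{n,m}\leq 2\H_{n,m}$; both are valid and rest on the same inequality \eqref{useful}.
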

\begin{proof}
The first part  involving $A$ and $L$ and (combinations of) its adjoints  
is obvious since it is a fact that for any operator $A$ in Hilbert spaces, 
$AA^*$ is bounded or compact if and only if 
either of $A$, $A^*$, or  $A^*A$ is (cf., e.g., \cite{KiHo24}).

For the second part, involving the Ces\`{a}ro-Hardy operators, we observe that 
we have shown in \eqref{aaa} that $\|\C\| \leq C \Bt(1)$ 
which gives 2.). 
Concerning 2a.), in the case when $\C$ is bounded, 
we mention that
by \eqref{xyz}, $\|\C\| < \infty$  implies $\|\H\| <\infty$, which implies 
\eqref{secondb}. This proves statement 2a). 

Concerning 3.) and 3a.). 
Assume that the compactness conditions 
\eqref{third} or \eqref{thirdb} or  hold. Then $\H$ is compact. 
let $f_K:= (I -P_K) f$   and observe that $|f_K| = (I -P_K) |f|$. 
Then, as $\C$ involves summation with positive weights, 
\begin{align*} |(\C (I -P) f,g)|  &= |(\C f_K,g)| \leq (\C |f_K|, |g|) \leq 
  (\C |f_K|, |g|) + (\C |g|, |f_K|) \\
  &\leq 
  (\H |f_K|,|g|) = 
    (\H (I -P_K) |f|,|g|)  \leq   \|\H (I -P_K)\|  \|f\| \|g\|.
\end{align*}
By compactness of $\H$, $\|\H(I -P_K)\|$ converges  to $0$ as $K\to \infty$, 
which implies that $\|\C(I -P_K)\|$ does, hence 
$\C$ is compact. 

It remains to show that compactness 
of $\C$ implies \eqref{secondb}. If $\C$ is compact, then so is $\C^*$. 
Thus, by \eqref{xyz}, 
\begin{align*} (\H f_K,g) &\leq  
((\C + \C^* )|f_K|,|g|) \leq 
 \|\C |f_K| \| \|g\| +  \|\C^* |f_K| \| \|g\|  \\
 &  = \|\C (I -P_K) |f| \| \|g\| +  \|\C^* (I -P_K)|f| \| \|g\|  \\
 & \leq 
 \|f\| \|g\| \left(\|\C (I - P_K)\| + \|\C^* (I - P_K)\|\right). 
\end{align*}
By compactness, the right-hand side tends to $0$ for bounded $f,g$, thus the norm 
$\|H (I -P_K)\|$ does, hence $\H$ is compact, hence \eqref{secondb} must 
be satisfied. 
\end{proof}


\section{Inverse properties}\label{sec:3}
We now study properties of $\H$ with respect to 
inverse problems; specifically we state conditions 
when $\H$ is injective and when $\H$ has a non-closed range. 
In this section we denote by $\Nu$ and $\Ra$ the nullspace and 
the range, respectively. 

\subsection{Injectivity} 
The first investigation  concerns the question if $\H$ has a nullspace. 
\begin{lemma}\label{lemmah1}
Let $f  = (f_n) \in \ell^2$ and let $x_i$, $d_i$ be as above. 
For any $s >0$ and any $x >0$,
 we have the estimate  
\[ \sum_{i=1}^\infty |f_i d_i| e^{- s x_i} \leq 
\|f\|_{\ell^2} (N(x)^\frac{1}{2} 
\sup_{x_i \leq x} \left( e^{- s x_i} \right)+ 
\|f\|_{\ell^2}  M(x)^\frac{1}{2} \sup_{x_i \geq x} \left( x_i e^{- s x_i} \right) \,.
\]
\end{lemma}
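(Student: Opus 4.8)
The plan is to bound the sum $\sum_i |f_i d_i| e^{-sx_i}$ by splitting it at the threshold $x$ into the two regions $\{x_i \leq x\}$ and $\{x_i \geq x\}$, and applying Cauchy--Schwarz on each piece with respect to the natural weights that appear in the definitions \eqref{defNM} of $N$ and $M$. The key structural observation is that the two summands on the right-hand side are built, respectively, from $N(x)^{\frac{1}{2}}$ (which sums $d_j^2$ over $x_j \leq x$) and $M(x)^{\frac{1}{2}}$ (which sums $d_j^2/x_j^2$ over $x_j \geq x$), so the factorization of each term must be chosen to reproduce exactly those weights.

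First I would write $\sum_{i=1}^\infty |f_i d_i| e^{-sx_i} = \sum_{x_i \leq x} |f_i d_i| e^{-sx_i} + \sum_{x_i \geq x} |f_i d_i| e^{-sx_i}$, taking care with the boundary term $x_i = x$ (it may be assigned to either sum, consistent with the $N$, $M$ conventions). For the first sum, I would pull out the supremum $\sup_{x_i \leq x} e^{-sx_i}$, leaving $\sum_{x_i \leq x} |f_i| d_i$, and apply Cauchy--Schwarz as
\begin{align*}
 \sum_{x_i \leq x} |f_i| d_i \leq \Bigl(\sum_{x_i \leq x} |f_i|^2\Bigr)^{\frac{1}{2}} \Bigl(\sum_{x_i \leq x} d_i^2\Bigr)^{\frac{1}{2}} \leq \|f\|_{\ell^2}\, N(x)^{\frac{1}{2}}.
\end{align*}
For the second sum, the weight $d_i^2/x_i^2$ defining $M$ forces me to factor the summand as $|f_i| \cdot \frac{d_i}{x_i} \cdot \bigl(x_i e^{-sx_i}\bigr)$; pulling out $\sup_{x_i \geq x}(x_i e^{-sx_i})$ and applying Cauchy--Schwarz to the product of $|f_i|$ and $\frac{d_i}{x_i}$ gives
\begin{align*}
 \sum_{x_i \geq x} |f_i|\, \tfrac{d_i}{x_i} \leq \Bigl(\sum_{x_i \geq x} |f_i|^2\Bigr)^{\frac{1}{2}} \Bigl(\sum_{x_i \geq x} \tfrac{d_i^2}{x_i^2}\Bigr)^{\frac{1}{2}} \leq \|f\|_{\ell^2}\, M(x)^{\frac{1}{2}}.
\end{align*}
Adding the two bounds yields the claimed inequality.

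I do not expect a genuine obstacle here; the proof is essentially a bookkeeping exercise in choosing the right factorization before applying Cauchy--Schwarz. The one point requiring mild care is the treatment of $x_i = x$ at the boundary, so that the indicator conventions match the definitions of $N$ and $M$ exactly; since both sums are finite by the well-definedness hypothesis (via \eqref{well}, $N(x)$ and $M(x)$ are finite at the relevant points), no convergence issue arises. I would also note that the supremum factors $\sup_{x_i \leq x} e^{-sx_i}$ and $\sup_{x_i \geq x}(x_i e^{-sx_i})$ are finite for $s, x > 0$, so the right-hand side is well-defined. The only subtlety worth flagging is that the statement as written has a slightly asymmetric placement of the factor $\|f\|_{\ell^2}$, which the Cauchy--Schwarz step accounts for automatically in both terms.
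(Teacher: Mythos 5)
Your proposal is correct and follows essentially the same route as the paper: split the sum at the threshold $x$, factor the tail terms as $|f_i|\cdot\tfrac{d_i}{x_i}\cdot(x_i e^{-sx_i})$, and apply Cauchy--Schwarz on each piece to produce $N(x)^{1/2}$ and $M(x)^{1/2}$. The only cosmetic difference is that the paper first works with finite partial sums $\sum_{i\le N}$ and then lets $N\to\infty$, which your direct argument covers since all terms are nonnegative.
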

\begin{proof}
For $N$ fixed, we estimate by the Cauchy-Schwarz inequality 
\begin{align*} \sum_{i=1}^N &|f_i d_i| e^{-s x_i}= 
\sum_{x_i \leq x, i\leq N } |f_i| d_i  e^{-s x_i}+ 
\sum_{x_i \geq x, i\leq N } |f_i| \frac{d_i}{x_i} x_i e^{-s x_i} \\ 
 & \qquad  \leq 
\left(\sum_{x_i \leq x, i\leq N } f_i^2 \right)^\frac{1}{2} 
\left(\sum_{x_i \leq x, i\leq N } d_i^2 \right)^\frac{1}{2}
\sup_{x_i \leq x} \left( e^{- s x_i} \right)
\\
& \qquad \qquad + 
\left(\sum_{x_i \geq x, i\leq N } \frac{d_i^2}{x_i^2}  
\right)^\frac{1}{2} 
\left(\sum_{x_i \geq x, i\leq N } f_i^2  \right)^\frac{1}{2}
\sup_{x_i \geq x, i\leq N  } \left(x_i e^{- s x_i} \right) \\ 
& \leq 
\|f\|_{\ell^2} N(x)^\frac{1}{2} \sup_{x_i \leq x} \left( e^{- s x_i} \right) + 
\|f\|_{\ell^2} M(x)^\frac{1}{2}  \sup_{x_i \geq x} \left(x_i e^{- s x_i} \right).
\end{align*} 
\end{proof}
We now prove injectivity. 
\begin{theorem}\label{inj} 
Let  $d_i> 0$,  let 
$\H$ be bounded, and assume that 
$x_i$ is monotonically rearrangeable. 
Then $\H$ is injective.
\end{theorem}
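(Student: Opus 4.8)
The plan is to exploit the factorization $\H = L L^*$ from \eqref{normallap} together with Lemma~\ref{lemmah1}. If $f \in \Nu(\H)$, then since $\H$ is bounded the identity $\H = L L^*$ holds with $L^*$ bounded, so
\[ 0 = (\H f, f) = (L L^* f, f) = \|L^* f\|_{L^2(0,\infty)}^2, \]
whence $L^* f = 0$ in $L^2(0,\infty)$. Thus it suffices to show that the Laplace-type series $g(t) := \sum_i d_i f_i e^{-x_i t}$ vanishes only if every $d_i f_i = 0$; as $d_i > 0$ this forces $f = 0$. Before that I would reduce to the increasing case using the transformation \eqref{trans}, which leaves $\H$ (hence $\Nu(\H)$ and boundedness) unchanged while replacing $x_i$ by $x_i^{-1}$ and $d_i$ by $d_i/x_i > 0$; under it a decreasingly rearrangeable sequence becomes increasingly rearrangeable. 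Hence, after relabelling, I may assume $x_1 < x_2 < \ldots$, so in particular $x_1 = \min_i x_i$ and $x_i - x_k > 0$ for $i > k$.

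Next I would identify the pointwise series $g$ with the $L^2$-function $L^* f$ and check that it is continuous on $(0,\infty)$. By Lemma~\ref{lemmah1} (any fixed $x$, with $s = a$) the series $\sum_i |d_i f_i| e^{-s x_i}$ converges for every $s > 0$, since $N(x), M(x) < \infty$ and $u \mapsto u e^{-su}$ is bounded. Consequently, on any $[a,b]\subset(0,\infty)$ the estimate $|d_i f_i e^{-x_i t}| \leq |d_i f_i| e^{-x_i a}$ and the Weierstrass $M$-test yield locally uniform, hence continuous, convergence of $g$. Because $L^*$ is bounded, the partial sums $g_N = L^*(P_N f)$ converge to $L^* f$ in $L^2$, so $g = L^* f$ almost everywhere; since $g$ is continuous and $L^* f = 0$, we conclude $g(t) = 0$ for all $t > 0$.

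The heart of the argument is a \emph{peeling} induction showing $c_k := d_k f_k = 0$ for all $k$. Assuming $c_1 = \cdots = c_{k-1} = 0$, I would write $e^{x_k t} g(t) = c_k + \sum_{i > k} c_i e^{-(x_i - x_k) t}$ and let $t \to \infty$. Each summand tends to $0$ because $x_i - x_k > 0$; moreover for $t \geq 1$ one has the $t$-independent dominating bound $|c_i e^{-(x_i - x_k) t}| \leq |c_i| e^{x_k} e^{-x_i}$, whose sum is finite by Lemma~\ref{lemmah1} (with $s = 1$). Dominated convergence for series then gives $0 = \lim_{t\to\infty} e^{x_k t} g(t) = c_k$, closing the induction. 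Hence $d_i f_i = 0$ for all $i$, so $f = 0$ and $\H$ is injective.

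I expect the only delicate points to be the two interchanges of limit and summation, namely identifying the $L^2$-limit $L^* f$ with the continuous pointwise sum $g$, and passing the limit $t \to \infty$ through the infinite series in the peeling step. Both are exactly what the uniform, $t$-independent domination supplied by Lemma~\ref{lemmah1} is designed to control, so I anticipate no substantive obstacle beyond bookkeeping once the reduction to the increasing case is made.
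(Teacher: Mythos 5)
Your proposal is correct and follows essentially the same route as the paper: factor $\H = L L^*$, reduce to $\Nu(L^*)$, use Lemma~\ref{lemmah1} to upgrade the a.e.\ vanishing of the Dirichlet series to pointwise vanishing of a continuous function, reduce to the increasing case via \eqref{trans}, and kill the leading coefficient by letting $t \to \infty$ against the gap $x_{i} - x_k > 0$. The only cosmetic difference is that you run a full peeling induction with dominated convergence, whereas the paper first discards the zero entries of $f$ and derives a contradiction at the first remaining index.
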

\begin{proof} 
 We 
 assume that the $x_i$ are already arranged in monotonically increasing order;
 at first consider the increasing case   
\begin{equation}\label{mono}
x_1 < x_2 \ldots  
\end{equation}
 We show that $\Nu(\H) = 0$.
We have 
with the Laplace transform $L$ defined in \eqref{lap}.
$\Nu(\H) = \Nu(L L^*) = \Nu(L^*)$.  
Let $f \in \Nu(\H)$, hence $f \in \Nu(L^*)$.

Assume that $f \not = 0$.
If we remove all indices with $f_i = 0$ (i.e., canceling rows and columns of $\H$)
from the sequences,  we can rewrite the problem using  corresponding 
subsequences of $x_i,d_i$ (keeping the same notation for them) 
and assuming now that $f_i \not = 0$ for all $i$. 
Note that \eqref{mono} holds also for such a subsequence.

By the assumption  that $f \in \Nu(L^*)$,  $f = (f_n)$, this sequences  satisfies 
\begin{equation}\label{abx} \sum_{i=1}^\infty f_i d_i e^{-x_i s} = 0 \qquad \text{for  a.e. } s >0,
\end{equation}
and the sum converges in the $L^2$-sense by continuity 
of $L^*$.  
By Lemma~\ref{lemmah1}, since $M(x)$ and $N(x)$ are finite, 
the series converges absolutely for $s >0$, and hence 
 the left-hand side in 
\eqref{abx} is a continuous function, and equality holds for all $s>0$. 

We may rewrite \eqref{mono} as 
\[ e^{-x_1 s} d_1  f_1 = - \sum_{i=2}^\infty f_i d_i e^{-x_i s}. \]
The right-hand side can be estimated by Lemma~\ref{lemmah1}, with
$M(x),N(x)$ defined as above but with $x_1$  removed; the corresponding 
terms are, however, upper-bounded by $M(x)$  and $N(x)$.  
Thus, we set $x = x_2$ to get  
\[ e^{-x_1 s} |d_1|  |f_1| \leq  
\|f\|_{\ell^2} N(x_2)^\frac{1}{2} e^{-s x_2} + 
\|f\|_{\ell^2} M(x_2)^\frac{1}{2}  \sup_{x_i  \geq x_2} x_i e^{-s x_i} \, .
\]
As the function $x \to x e^{-s x}$  is monotonically decreasing for $ x > s^{-1}$, 
with the setting  $s > x_2^{-1}$ and the monotonicity assumption for the $x_i$, we obtain 
using constants $C_1= \|f\|_{\ell^2} N(x_2)^\frac{1}{2},$ 
$C_2 = \|f\|_{\ell^2} M(x_2)^\frac{1}{2} $:
\begin{align*} |d_1|  |f_1| & \leq  
\left[ C_1  e^{-s x_2} + C x_2  e^{-s x_2} \right] e^{x_1 s} \\
& = 
\left[ C_1  e^{-s (x_2-x_1)} + C x_2  e^{-s (x_2-x_1)} \right]
\qquad \text{ for all } s > x_2^{-1}.
\end{align*}
Taking the limit $s \to \infty$, noting that $x_2 -x_1 >0$, 
yields  
 that $f_1  =0$, which is a contradiction to $f_i \not = 0$.
Thus, $f = 0$. 
In case that we can rearrange the $x_i$ into a increasing function, 
we make the transformation $\tilde{x_i} = x_i^{-1}$,
$\tilde{d_i} = \frac{d_i}{x_i}$ without changing $\H$. 
\end{proof}

Next, we discuss the ill-posedness of $\H$. 
\begin{theorem}
Let $d_i>0$, and let $\H$ be bounded and injective. 
If $x_i,d_i$ satisfy 
\begin{equation}\label{illcond} \liminf_{i} \frac{d_i}{\sqrt{x_i}} = 0, 
\end{equation}
then $\Ra(\H)$ is non-closed. 
\end{theorem}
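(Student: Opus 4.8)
The plan is to show that $\H$ fails to be bounded below and then invoke the standard fact that a bounded injective operator with closed range is bounded below. Since $\H$ is bounded and injective by hypothesis, if $\Ra(\H)$ were closed then $\H$ would be a continuous bijection onto the Banach space $\Ra(\H)$, and the inverse mapping theorem would furnish a constant $c>0$ with $\|\H f\|_{\ell^2}\geq c\|f\|_{\ell^2}$ for all $f$. Therefore it suffices to exhibit a sequence of unit vectors whose images under $\H$ tend to $0$.

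For the test sequence I would use the standard unit vectors $e_k\in\ell^2$ (a $1$ in position $k$, zeros elsewhere), because the diagonal entries of $\H$ are controlled directly by the hypothesis \eqref{illcond}: indeed $(\H e_k,e_k)_{\ell^2}=\H_{k,k}=\frac{d_k^2}{2x_k}=\frac12\left(\frac{d_k}{\sqrt{x_k}}\right)^2$. The remaining task is to upgrade this control of the quadratic form to control of the full norm $\|\H e_k\|_{\ell^2}$, and here I would exploit that $\H=LL^*$ is self-adjoint and positive semidefinite. Writing $\H^{1/2}$ for its positive square root, one has $\|\H^{1/2}\|^2=\|\H\|$ and $\|\H^{1/2}e_k\|_{\ell^2}^2=(\H e_k,e_k)_{\ell^2}$, whence
\[ \|\H e_k\|_{\ell^2}^2=\|\H^{1/2}(\H^{1/2}e_k)\|_{\ell^2}^2\leq \|\H\|\,(\H e_k,e_k)_{\ell^2}=\frac{\|\H\|}{2}\,\frac{d_k^2}{x_k}. \]
Equivalently, from $0\leq\H\leq\|\H\|I$ one obtains the operator inequality $\H^2\leq\|\H\|\H$, yielding the same bound.

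By \eqref{illcond} we have $\liminf_k \frac{d_k^2}{x_k}=0$, so I would select a subsequence $k_n$ along which $\frac{d_{k_n}^2}{x_{k_n}}\to 0$. Then $\|e_{k_n}\|_{\ell^2}=1$ while $\|\H e_{k_n}\|_{\ell^2}\to 0$ by the displayed estimate, so $\H$ is not bounded below; combined with the reduction in the first paragraph this gives that $\Ra(\H)$ is non-closed. I do not expect a genuine obstacle: the only points requiring care are the equivalence between closed range and boundedness from below for an injective bounded operator, and the use of positivity (via the square root, or via $\H^2\leq\|\H\|\H$) to pass from the easily computed diagonal quadratic form $(\H e_k,e_k)$ to the operator norm $\|\H e_k\|$. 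Notably the argument uses neither monotone rearrangeability nor anything about $N(x_k)M(x_k)$ beyond the boundedness of $\H$ that is already assumed.
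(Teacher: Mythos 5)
Your proof is correct, and it takes a genuinely different route from the paper's. The paper argues through the factorization $\H = LL^*$: if $\Ra(\H)$ were closed, then by injectivity and self-adjointness $\Ra(L)=\Ra(\H)=\Nu(\H)^\bot=\ell^2$, so the closed range (open mapping) theorem yields for every $z\in\ell^2$ a preimage $f_z$ with $\|f_z\|\leq C\|z\|$; testing with $z=e_k$ and using the explicit kernel norm $\|e^{-x_k\cdot}\|_{L^2(0,\infty)}=(2x_k)^{-1/2}$ gives the uniform lower bound $d_k/\sqrt{x_k}\geq \sqrt{2}/C$, contradicting \eqref{illcond}. You instead stay entirely at the level of $\H$: injectivity plus closed range gives boundedness below via the inverse mapping theorem, and the positivity inequality $\H^2\leq\|\H\|\H$ converts the diagonal entries $(\H e_k,e_k)=\H_{k,k}=d_k^2/(2x_k)$ into $\|\H e_k\|^2\leq \tfrac{1}{2}\|\H\|\,d_k^2/x_k$, which tends to zero along a subsequence. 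The two arguments hinge on the same quantity --- your diagonal entry is exactly $d_k^2\|e^{-x_k\cdot}\|_{L^2(0,\infty)}^2$ --- but yours is more self-contained (no Laplace transform, no closed range theorem for $L$) and in fact proves a more general statement: any bounded, injective, positive self-adjoint operator on $\ell^2$ whose diagonal entries have liminf zero has non-closed range. The one step you should make explicit is the positive semidefiniteness of $\H$, which both your square-root argument and the inequality $\H^2\leq\|\H\|\H$ require: it follows either from the paper's factorization $\H=LL^*$ (with $L$ bounded whenever $\H$ is), or directly from $\frac{1}{x_i+x_j}=\int_0^\infty e^{-(x_i+x_j)t}\,dt$, which gives
\begin{equation*}
(\H f,f)=\int_0^\infty \Bigl(\sum_i d_i f_i e^{-x_i t}\Bigr)^2 dt\geq 0
\end{equation*}
for finitely supported $f$, and hence for all $f\in\ell^2$ by continuity.
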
 
\begin{proof}
Suppose on the contrary that $\Ra(\H)$ is closed. 
Then $\Ra(\H) = \Ra(L L^*) = \Ra(L)$ and $\Ra(L)$ is closed as well. 
By the assumed injectivity of $\H$, 
we have that $\Ra(L)= \Ra(\H) = N(\H)^\bot = \ell^2$,
and by the closed range theorem, $L^{-1}$ is bounded. 
Consequently, for any sequence in $z \in \ell^2$, there is 
a $f_z \in L^2(0,\infty)$ with 
\[ z_k = d_k \int_0^\infty f_z(t) e^{- x_k t} dt ,  \quad \text{ and } 
\quad \|f_z\|_{L^2(0,\infty)} \leq C \|z\|_{\ell^2}. \]
Hence,
\[ |\frac{z_k}{d_k}| \leq \|f_z\||_{L^2(0,\infty)} \| e^{-x_k .}\|_{L^2(0,\infty)}
\leq C \|z\| \frac{1}{\sqrt{2 x_k}}. \]
Taking $z = e_k$ the $k$-th unit vector in $\ell^2$ yields that for all $k$,
\[  \frac{\sqrt{2}}{ C} \leq \frac{d_k}{\sqrt{x_k}},   \]
contradicting the assumption.
\end{proof} 


\begin{corollary}\label{colarry}
Let $d_i>0$ and  $\H$ be bounded. 
Let $x_k$ be a (infinite)  monotonically increasing rearrangeable sequence,
 and let either  $d_i$  or $x_i$ be 
bounded. 
Then $\H$ is ill-posed in the sense of Nashed, i.e., $\Ra(\H)$ is non-closed. 
\end{corollary}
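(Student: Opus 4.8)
The plan is to reduce the statement to the preceding theorem by verifying its two hypotheses, namely injectivity of $\H$ and the condition \eqref{illcond} that $\liminf_i d_i/\sqrt{x_i}=0$, and then to invoke that theorem to conclude that $\Ra(\H)$ is non-closed.

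First I would put the sequence into increasing order, which is legitimate because $x_k$ is monotonically increasingly rearrangeable and a rearrangement induces only a unitary change of coordinates on $\ell^2$; such a conjugation alters neither boundedness of $\H$, nor its injectivity, nor the closedness of its range. Since $d_i>0$, $\H$ is bounded, and $x_i$ is monotonically rearrangeable, Theorem~\ref{inj} applies and gives injectivity of $\H$ at once.

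The core of the argument is establishing \eqref{illcond}. The key observation is that a bounded operator on $\ell^2$ is in particular well-defined, so by \eqref{well} the quantity $M(x_1)$ from \eqref{defNM} is finite; for an increasing sequence one has $x_j\geq x_1$ for every $j$, whence $M(x_1)=\sum_{j=1}^\infty d_j^2/x_j^2<\infty$. I would then split along the two hypothesized cases. If $x_i$ is bounded, then being increasing it converges to some limit $L$ with $0<x_1\leq x_j\leq L$, so $d_j^2\leq L^2\, d_j^2/x_j^2$ and convergence of $\sum_j d_j^2/x_j^2$ forces $\sum_j d_j^2<\infty$; hence $d_j\to 0$ and $d_j/\sqrt{x_j}\leq d_j/\sqrt{x_1}\to 0$. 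If instead $d_i$ is bounded by some $D$, then either $x_i\to\infty$, in which case $d_i/\sqrt{x_i}\leq D/\sqrt{x_i}\to 0$, or $x_i$ remains bounded, which is exactly the case just handled. In every instance the full sequence $d_i/\sqrt{x_i}$ tends to $0$, so \eqref{illcond} holds a fortiori.

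With injectivity and \eqref{illcond} both secured, the preceding theorem yields that $\Ra(\H)$ is non-closed, i.e.\ $\H$ is ill-posed in the sense of Nashed. I expect the only subtle point to be recognizing that well-definedness of a bounded $\H$ delivers the summability $\sum_j d_j^2/x_j^2<\infty$, which is the single analytic fact driving both cases; once this is identified the remaining deductions are elementary, so no serious obstacle is anticipated.
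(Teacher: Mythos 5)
Your proposal is correct and follows essentially the same route as the paper: injectivity via Theorem~\ref{inj}, the summability $\sum_j d_j^2/x_j^2 = M(x_1) < \infty$ extracted from boundedness, and a case split on whether $x_i$ or $d_i$ is bounded to verify \eqref{illcond} before invoking the non-closed-range theorem. The only cosmetic differences are that the paper obtains $M(x_1)<\infty$ from the Muckenhoupt-type bound $N(x_1)M(x_1)\leq C$ rather than from well-definedness, and in the bounded-$x_i$ case argues via $d_i/x_i\to 0$ rather than $d_i\to 0$.
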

\begin{proof}
By Theorem~\ref{inj}, $\H$ is injective. 
Since $N(x_1) \neq  0$, and the boundedness condition holds, $M(x_1) < CN(x_1)^{-1} < \infty$. 
This means that $\sum_{i=1}^\infty \frac{d_i^2}{x_i^2} $ converges, in particular 
$\frac{d_i}{x_i}$ must tend to $0$. If $x_i$ is bounded, then 
$\frac{d_i}{\sqrt{x_i}} = \frac{d_i}{x_i} \sqrt{x_i} \leq C   \frac{d_i}{x_i}$ 
and thus  \eqref{illcond} holds and $\H$ has non-closed range. 
Assume now that the $d_i$ are  bounded 
 and  that the $x_i$ are unbounded.  Then, $\frac{d_i}{\sqrt{x_i}} \leq C \frac{1}{\sqrt{x_i}} \to 0$
 as $i\to \infty$.
 Again by \eqref{illcond}
  $\H$ has non-closed range. 
\end{proof}
  
By the transformation \eqref{trans} we may state according conditions for the 
monotonically decreasing case as $x_i$ being bounded from below or 
$\frac{d_i}{x_i}$ being bounded.


\section{Examples and Consequences}
We provide some illustration of the results by further examples. 

Let us consider a Hilbert matrix generated by $d_i = 1$, $x_i = i^\alpha -\frac{1}{2}$,
where $\alpha >0$:
\[ (\H_{\alpha})_{i,j} = \frac{1}{i^\alpha + j^\alpha-1}. \]
The case $\alpha = 1$ is that of the  standard Hilbert matrix. 
We have for $\alpha >\frac{1}{2}$, 
\[ N(x_k) =  k , \] 
\[   \frac{C_1}{k^{2 \alpha +1}} =
\int_{k}^\infty \frac{1}{x^{2\alpha}-\frac{1}{2}} 
\leq M(x_k) = \sum_{i\geq k} \frac{1}{i^{2\alpha}-\frac{1}{2}}                            
\leq  \int_{k+1}^\infty \frac{1}{x^{2\alpha}-\frac{1}{2}}   \leq \frac{C_2}{(k+1)^{2 \alpha +1}}.
 \]
Thus, 
\[ 
  k^{2 - 2\alpha} = 
 k \frac{1}{k^{2\alpha-1}} \leq 
N(x_k) M(x_k) \leq C k \frac{1}{(k+1)^{2\alpha-1}} \leq C (k+1)^{2 - 2\alpha}, \]

For $\alpha >1$ these bounds tends to $0$ and they  
are bounded for $\alpha = 1$. The case of $\alpha <1$   always leads to unbounded $\H_\alpha$.
Thus, 
\[ \H_{\alpha} = \begin{cases} \text{unbounded} & \alpha < 1, \\ 
                               \text{noncompact} & \alpha = 1,\\ 
                               \text{compact} & \alpha >1, \end{cases} \]
and, for $\alpha \geq 1$ 
it is always ill-posed
in the sense of Nashed  by Corollary~\ref{colarry} since the $d_i$ are bounded.

Related is the sampling of the Laplace transform: 
\begin{corollary}
Consider the sampled Laplace transform 
 $L: L^2(0,\infty) \to \ell^2$ of \eqref{lap} with sampling points $x_i$. 

If the $x_i$ are sampled at shifted integer points $x_i = i -s$, $i \in \N $, $0 < s < 1$, 
then $L$ is a noncompact operator, and the inversion is an ill-posed problem 
of type I. 

If the $x_i$ are sampled at the following 
points with increasing distance $x_i = i^\alpha -s$, $i \in \N$, $0< s < 1$, 
with $\alpha >1$, 
then $L$ is a compact operator, and the inversion is a type-II ill-posed problem.  
\end{corollary}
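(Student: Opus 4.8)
The plan is to read off both statements as direct applications of the apparatus already in place: the normal-operator identity $\H = L L^*$ from \eqref{normallap}, the subsequent theorem transferring boundedness and compactness between $\H$ and $L$, the characterization in Theorem~\ref{mainth}, and the ill-posedness statement of Corollary~\ref{colarry}. In both cases the weights are $d_i = 1$, so $\H$ is the generalized Hilbert matrix with nodes $x_i = i^\alpha - s$, the first claim being the case $\alpha = 1$. Since $0 < s < 1$ we have $x_1 = 1 - s > 0$ and the sequence is strictly increasing, hence positive, pairwise disjoint, and monotonically rearrangeable; therefore items 2a.) and 3a.) of Theorem~\ref{mainth} are available and the whole problem reduces to the asymptotics of $N(x_k) M(x_k)$.

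First I would compute the two factors. Because $d_j = 1$ and $x_j \leq x_k$ is equivalent to $j \leq k$, we get $N(x_k) = k$ immediately. For $M$ I would use the same integral comparison as in the preceding $\H_\alpha$ example: for $\alpha \geq 1$,
\begin{equation*}
 M(x_k) = \sum_{j \geq k} \frac{1}{(j^\alpha - s)^2} \asymp \int_k^\infty \frac{dx}{x^{2\alpha}} \asymp k^{1 - 2\alpha},
\end{equation*}
with two-sided constants that do not depend on $k$. Hence $N(x_k) M(x_k) \asymp k^{2 - 2\alpha}$. For $\alpha = 1$ this product is bounded above and below by positive constants, so 2a.) gives boundedness while 3a.) denies compactness; for $\alpha > 1$ the exponent $2 - 2\alpha$ is negative, so the product tends to $0$ and 2a.)--3a.) yield that $\H$ is bounded and compact.

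Next I would transfer these facts to $L$. By the subsequent theorem on the related operators, $L$ is bounded and noncompact when $\alpha = 1$ and compact when $\alpha > 1$. For ill-posedness I would invoke Corollary~\ref{colarry}: the nodes form an infinite, monotonically increasing, rearrangeable sequence, the weights $d_i = 1$ are bounded, and $\H$ has just been shown bounded, so $\Ra(\H)$ is non-closed. As $\Ra(\H) = \Ra(L L^*)$ and closedness of the range passes between $L$ and $L L^*$, the range $\Ra(L)$ is non-closed too. Assembling the pieces: for $\alpha = 1$, $L$ is noncompact with non-closed range, i.e.\ type-I; for $\alpha > 1$, $L$ is compact with non-closed range, i.e.\ type-II.

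The single genuine computation is the two-sided estimate for $M(x_k)$, and the only delicate point there is the shift $-s$: one must verify that $x_i > 0$ for every $i$ --- which is exactly what $s < 1$ guarantees --- so that the summands are comparable to $j^{-2\alpha}$ with constants uniform in $k$. Since this is the same estimate already carried out for $x_i = i^\alpha - \tfrac{1}{2}$, I expect no real obstacle; the content of the corollary lies entirely in tracking which earlier result supplies each half of the type-I versus type-II dichotomy.
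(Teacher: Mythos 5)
Your proposal is correct and follows essentially the same route as the paper: the paper proves this corollary implicitly via the immediately preceding $\H_\alpha$ example (computing $N(x_k)=k$ and $M(x_k)\asymp k^{1-2\alpha}$, then applying items 2a.) and 3a.) of Theorem~\ref{mainth}, the transfer theorem for $L$, and Corollary~\ref{colarry} for non-closedness of the range), and the shift $-s$ in place of $-\tfrac{1}{2}$ changes nothing in those estimates. No gaps.
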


A further example concerns a sideways 1-D heat equation. 
Consider the heat equation on the interval $[0,\pi]$ with 
Dirichlet boundary condition and initial conditions  $u_0 \in L^2$: 
\[ u_t(x,t) = \Delta u(x,t), \quad u(t=0) = u_0 \quad u(t,0) = u(t,\pi) = 0. \]
The solution can be found by a separation of variables. 
\[ u(x,t) = \sum_{k=1}^\infty  a_n \sin(n x) e^{- n^2 t}, \]
with  $a_n \in \ell^2$ the Fourier-sine coefficients of $u_0$. 

Consider now the recovery of $u_0$ from temporal measurements at the midpoint $x =\frac{\pi}{2}$, 
i.e., the data are $u(t,\frac{\pi}{2})$, $t \in \R$, and we use the $L^2(0,\infty)$-norm for the 
data space.
Then, the forward operator is 
\[ u_0 \to \sum_{n=0}^\infty  a_{2k+1} (-1)^k  e^{- (2 k +1)^2 t}, \]
This corresponds to  the mapping \eqref{lapad} 
$L^* \left((a_{2k+1} (-1)^k)_k \right)$ and $x_k = (2k +1)^2$. 
The mapping of $u_0 \in L^2$ to $a_{2k+1} (-1)^k \in \ell^2$ is bounded with continuous 
pseudo-inverse; thus the ill-posedness is determined by $L^*$, which, by the previous 
results is a compact (ill-posed) operator. Thus, this inverse problem is a type-II ill-posed problem.

Consider the similar problem with anomalous fractional diffusion (fractional heat equation) \cite{Vaz}: 
\[ u_t(x,t) = - (-\Delta )^\frac{1}{2} u(x,t), \quad u(t=0) = u_0 \quad u(t,0) = u(t,\pi) = 0. \]
The solution  is now given by 
\[ u(x,t) = \sum_{n=1}^\infty  a_{2k+1} \sin(nx) e^{- (2k+1) t}, \]
and the forward operator  corresponds as above to a mapping  $L^*$ with $x_k = 2 k +1$. 
Thus, since the $x_k$ are increasing linearly, 
in this case we obtain a non-compact ill-posed problem, i.e., a type-I problem.

\subsection{Singular values of $\H$} 
The result of Theorem~\ref{mainth} can also be used to estimate the 
singular values of a generalized Hilbert matrix.  For the case 
$d_n = \frac{1}{n}$ $x_n = n -\frac{1}{2}$ 
with the matrix $\H_{i,j} = \frac{1}{i} \frac{1}{i + j-1} \frac{1}{j}$, such 
estimates have been worked out in \cite{KiHo24} (cf.~similar results in 
\cite{HofMat22,Gerth22a}). 
It has been shown  (by considering the Hausdorff operator) 
in \cite{KiHo24} that 
\begin{equation}\label{upa} 
C \exp(-2 i)  \leq \sigma_i^\frac{1}{2}(\H) \leq \frac{C_2}{i^{3/2}}.   \end{equation}
This means that the decay of the eigenvalues of $\H$ 
is between polynomial and exponential, which 
leaves quite a gap of possibilities. 

Estimating the decay sharply 
is related to the question if the composition of 
type-I operators ``destroys'' the decay of type-II operator and has been studied 
by Hofmann from different perspectives \cite{HofWolf05,HofWolf09,HofMat22}. 
In particular, the ``{\em Hofmann problem}'' for the Hilbert matrix, i.e.,  
the question whether the singular values $\sigma_i(\H)$ decay polynomially or exponentially, 
(i.e., which of the upper or lower bound is sharper), 
it is an extremely difficult problem and still open.

Note that in the finite-dimensional case, it has been shown that the eigenvalues 
always decay exponentially~\cite{Becker},
but this does not help in the infinite-dimensional case 
as the accurate numerical computation of 
the smallest eigenvalues of  high-dimensional Hilbert matrices is almost impossible.

Using the established tools, we can find similar bound as in \eqref{upa} for general 
Hilbert matrices. 
\begin{theorem} \label{finalth}
Let $\H$ be the Hilbert matrix in \eqref{defHil} with the stated assumptions on $x_i,d_i$. 
Assume that $\H$ is compact. Denote by $\sigma_\myK(\H)$, $\myK = 1,\ldots $, the singular values (=eigenvalues) of $\H$ in decreasing order. We have the following estimate:
\begin{align*}
&\gamma_\myK \,\left(\frac{\frac{1}{2}\sum_{i=1}^\myK\frac{d_i^2}{ x_i} }{\myK-1}\right)^{-(\myK-1)} 
\leq \sigma_\myK(\H)  \leq \Bt(\myK) 
\end{align*}   
with $\Bt(\myK)$ defined in \eqref{btdef} and 
where $\gamma_\myK$  is the determinant of the first $\myK\times \myK$ principal submatrix of $\H$ given by 
\begin{equation}\label{detc}
\gamma_\myK =  \Pi_{i=1}^\myK d_i^2 
\frac{\Pi_{\myK\geq i>j \geq 1 } (x_i- x_j)^2}{\Pi_{1 \leq i,j \leq \myK} (x_j + x_j)}  .
\end{equation}
\end{theorem}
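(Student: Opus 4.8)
The plan is to prove the two bounds independently. Throughout I use that $\H$ is compact, self-adjoint and positive semidefinite (being of the form $L L^*$), so that its singular values coincide with its eigenvalues, arranged as $\sigma_1(\H) \geq \sigma_2(\H) \geq \cdots \geq 0$.

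For the upper bound I would use the Courant--Fischer min-max characterisation
\[ \sigma_\myK(\H) = \min_{\dim V = \myK-1}\ \max_{\substack{f \perp V\\ \|f\|=1}} (\H f, f) \]
and test it with the coordinate subspace $V = \Ra(P_{\myK-1})$, spanned by $e_1,\dots,e_{\myK-1}$. Every admissible $f$ is then supported on the indices $\geq \myK$, so $(\H f,f)$ is the quadratic form of the tail submatrix $(\H_{i,j})_{i,j\geq\myK}$ and the inner maximum equals its operator norm. This tail is itself a generalized Hilbert matrix for the subsequences $(x_i)_{i\geq\myK}$, $(d_i)_{i\geq\myK}$, whose $N$- and $M$-functions are dominated by the full ones, so the boundedness estimate \eqref{thisesti}, applied to the tail starting at index $\myK$, bounds its norm by $\Bt(\myK)$. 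This gives $\sigma_\myK(\H) \leq \Bt(\myK)$ at once.

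For the lower bound I would descend to the leading $\myK\times\myK$ principal submatrix $\H^{(\myK)} := (\H_{i,j})_{1\leq i,j\leq\myK}$. Since $\H^{(\myK)} = D\,C\,D$, where $C=\big(\tfrac{1}{x_i+x_j}\big)_{1\le i,j\le\myK}$ is a Cauchy matrix and $D=\operatorname{diag}(d_1,\dots,d_\myK)$, it is congruent to $C$; the latter is positive definite (e.g.\ it is the Gram matrix of the functions $t\mapsto e^{-x_i t}$ in $L^2(0,\infty)$), so the eigenvalues $\mu_1\ge\cdots\ge\mu_\myK$ of $\H^{(\myK)}$ are strictly positive and satisfy $\prod_{j=1}^\myK\mu_j = \det\H^{(\myK)} = \gamma_\myK$ by the Cauchy determinant formula \eqref{detc}. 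Because compressing a positive operator only lowers eigenvalues, the min-max principle yields $\mu_\myK = \sigma_\myK(\H^{(\myK)}) \leq \sigma_\myK(\H)$, so it remains to bound $\mu_\myK$ from below. Writing $\mu_\myK = \gamma_\myK \big/ \prod_{j=1}^{\myK-1}\mu_j$ and applying AM--GM to the first $\myK-1$ eigenvalues,
\[ \prod_{j=1}^{\myK-1}\mu_j \leq \Big(\tfrac{1}{\myK-1}\sum_{j=1}^{\myK-1}\mu_j\Big)^{\myK-1} \leq \Big(\tfrac{\tr\H^{(\myK)}}{\myK-1}\Big)^{\myK-1}, \]
where I drop the nonnegative term $\mu_\myK$ from the trace. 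Since $\tr\H^{(\myK)} = \sum_{i=1}^\myK \H_{i,i} = \tfrac12\sum_{i=1}^\myK \tfrac{d_i^2}{x_i}$, substituting back reproduces exactly the asserted lower bound.

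The two steps that demand care are the direction of the compression monotonicity---that the smallest eigenvalue of the finite block lies below $\sigma_\myK(\H)$, not above---and the index bookkeeping ensuring the tail starting at $\myK$ is governed by $\Bt(\myK)$ rather than $\Bt(\myK-1)$; both are settled by keeping the summation ranges of $N$ and $M$ aligned with the subsequence at hand. The remaining ingredients (the Cauchy determinant, the trace identity, and AM--GM) are routine.
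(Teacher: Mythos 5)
Your proposal is correct and follows essentially the same route as the paper: the upper bound via min--max restricted to the complement of the first $\myK-1$ coordinates together with the tail-norm estimate \eqref{thisesti} (the paper phrases this through the Courant inequality for $L$ and $\sigma_\myK(\H)=\sigma_\myK(L)^2$, which is equivalent to your direct compression of $\H$), and the lower bound via Cauchy interlacing for the leading $\myK\times\myK$ principal submatrix, the AM--GM/trace estimate on the top $\myK-1$ eigenvalues, and the Cauchy determinant formula. Your explicit remark that the Cauchy block is positive definite (as a Gram matrix of exponentials), which licenses dividing by $\prod_{j<\myK}\mu_j$, is a small but welcome addition the paper leaves implicit.
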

\begin{proof}
Denote by $\sigma_k(L)$, $k = 1,\ldots,$  the singular values in decreasing order of the 
sampled Laplace transform \eqref{lap}. 
By the Courant inequalities,  it follows that $\sigma_\myK(L) \leq \|(I - P_{\myK-1}) L\|$, where 
$P_\myK$ is a projector onto a finite-dimensional subspace. In particular, with $P_\myK$ from \eqref{prj} we obtain 
\[ \sigma_k(\H)^\frac{1}{2} = \sigma_k(L) \leq \|(I - P_{\myK-1}) L\| = \|(I - P_{\myK-1}) L L^*(I - P_{\myK-1})\|^\frac{1}{2}, \]
and the estimate follows from \eqref{thisesti}. 

For the lower bound,  let  $\H^\myK$ be the $\myK\times \myK$ Hilbert matrix obtained by taking the 
first $\myK$ rows and columns. Denote by $(\lambda_{i;\myK})_{i=1}^\myK$ its eigenvalues
in decreasing order.  Thus,  using the geometric-arithmetic mean inequality, 
and denoting by $\tr$ the trace, 
\begin{align*}
 &\det(\H^\myK) = \Pi_{i=1}^\myK \lambda_{i;\myK} = 
 \lambda_{\myK;\myK}  \Pi_{i=1}^{\myK-1} \lambda_{i;\myK} \leq 
   \lambda_{\myK;\myK}   \left(\frac{\sum_{i=1}^{\myK-1} \lambda_{i;\myK}}{\myK-1}\right)^{\myK-1} \\
   & \leq 
    \lambda_{\myK;\myK}   \left(\frac{\sum_{i=1}^{\myK} \lambda_{i;\myK}}{\myK-1}\right)^{\myK-1} = 
     \lambda_{\myK;\myK}   \left(\frac{\tr(\H^\myK)}{\myK-1}\right)^{\myK-1} 
     =    \lambda_{\myK;\myK}  \left(\frac{\sum_{i=1}^\myK \frac{1}{2}\frac{d_i^2}{ x_i} }{\myK-1}\right)^{\myK-1}. 
\end{align*}
The Cauchy interlacing theorem \cite[Corollary III.1.5]{Bhat} 
gives $ \lambda_{\myK;\myK} \leq \lambda_\myK (= \sigma_\myK(\H))$. The determinant of $\H^\myK$ is given by 
\eqref{detc} since  that of 
the Cauchy-matrix $\frac{1}{x_i + x_j}$ is well-known, e.g., \cite{Bjorck}, and 
the $d_i$ involve just an additional matrix multiplication by a diagonal matrix. 
This proves the result. 
\end{proof}

For the case in \cite{KiHo24} mentioned above, $d_n = \frac{1}{n}$ $x_n = n -\frac{1}{2}$,
we have that the upper bound is of the order of $\Bt(\myK) \sim \frac{1}{\myK^3}$ yielding a
similar   upper bound as in \eqref{upa}.
The lower bound can be estimated by $\sum_{i=1}^\myK\frac{d_i^2}{ x_i} \leq C$, 
the asymptotic formula for the determinant of the usual Hilbert matrix
$\sim C \myK^{-1/4} (2 \pi)^\myK 4^{-\myK^2}$,
and 
Stirling's approximation for $\Pi_{i=1}^\myK {d_i} = \frac{1}{\myK!}$ yielding 
\[ \gamma_\myK \sim 
  (2 \pi \myK)^{-1} \myK^{-2\myK} e^{2 \myK} \myK^{-1/4} (2 \pi)^\myK 4^{-\myK^2}. 
\]
Thus, we get a lower bound  of order $C e^{-C \myK^2}$, which is worse than that in \eqref{upa};
however, our  lower bound in Theorem~\ref{finalth} is valid for general sequences of $x_i,d_i$.


\bibliographystyle{siam}
\bibliography{Hilbert}

\begin{thebibliography}{10}

\bibitem{Becker}
{\sc B.~Beckermann and A.~Townsend}, {\em Bounds on the singular values of
  matrices with displacement structure}, SIAM Rev., 61 (2019), pp.~319--344.

\bibitem{Bhat}
{\sc R.~Bhatia}, {\em Matrix analysis}, vol.~169 of Graduate Texts in
  Mathematics, Springer-Verlag, New York, 1997.

\bibitem{Bjorck}
{\sc A.~k. Bj\"{o}rck}, {\em Numerical methods in matrix computations}, vol.~59
  of Texts in Applied Mathematics, Springer, Cham, 2015.

\bibitem{Brown65}
{\sc A.~Brown, P.~R. Halmos, and A.~L. Shields}, {\em Ces\`aro operators}, Acta
  Sci. Math. (Szeged), 26 (1965), pp.~125--137.

\bibitem{DFH24}
{\sc Y.~Deng, H.-J. Fischer, and B.~Hofmann}, {\em The degree of ill-posedness
  for some composition governed by the {C}es\`{a}ro operator},  (January 2024).
\newblock Paper submitted. https://arxiv.org/pdf/2401.11411.pdf.

\bibitem{engl}
{\sc H.~W. Engl}, {\em Integralgleichungen}, Springer Lehrbuch Mathematik.,
  Springer-Verlag, Vienna, 1997.

\bibitem{Freitag05}
{\sc M.~Freitag and B.~Hofmann}, {\em Analytical and numerical studies on the
  influence of multiplication operators for the ill-posedness of inverse
  problems}, J. Inverse Ill-Posed Probl., 13 (2005), pp.~123--148.

\bibitem{Gerth22a}
{\sc D.~Gerth}, {\em A note on numerical singular values of compositions with
  non-compact operators}, Electron. Trans. Numer. Anal., 57 (2022), pp.~57--66.

\bibitem{Gerth22}
{\sc D.~Gerth and B.~Hofmann}, {\em A note on open questions asked to analysis
  and numerics concerning the {H}ausdorff moment problem}, Eurasian Journal of
  Mathematical and Computer Applications, 10 (2022), pp.~40--50.

\bibitem{Gerth21}
{\sc D.~Gerth, B.~Hofmann, C.~Hofmann, and S.~Kindermann}, {\em The {H}ausdorff
  moment problem in the light of ill-posedness of type~{I}}, Eurasian Journal
  of Mathematical and Computer Applications, 9 (2021), pp.~57--87.

\bibitem{Polya}
{\sc G.~H. Hardy, J.~E. Littlewood, and G.~P\'{o}lya}, {\em Inequalities},
  Cambridge University Press, 1952.
\newblock 2d ed.

\bibitem{HeinHof03}
{\sc T.~Hein and B.~Hofmann}, {\em On the nature of ill-posedness of an inverse
  problem arising in option pricing}, Inverse Problems, 19 (2003),
  pp.~1319--1338.

\bibitem{Hof06}
{\sc B.~Hofmann}, {\em Approximate source conditions in {T}ikhonov-{P}hillips
  regularization and consequences for inverse problems with multiplication
  operators}, Math. Methods Appl. Sci., 29 (2006), pp.~351--371.

\bibitem{HofFlei99}
{\sc B.~Hofmann and G.~Fleischer}, {\em Stability rates for linear ill-posed
  problems with compact and non-compact operators}, Z. Anal. Anwendungen, 18
  (1999), pp.~267--286.

\bibitem{HofKind10}
{\sc B.~Hofmann and S.~Kindermann}, {\em On the degree of ill-posedness for
  linear problems with non-compact operators}, Methods Appl. Anal., 17 (2010),
  pp.~445--461.

\bibitem{HofMat22}
{\sc B.~Hofmann and P.~Math\'{e}}, {\em The degree of ill-posedness of
  composite linear ill-posed problems with focus on the impact of the
  non-compact {H}ausdorff moment operator}, Electron. Trans. Numer. Anal., 57
  (2022), pp.~1--16.

\bibitem{HofTau97}
{\sc B.~Hofmann and U.~Tautenhahn}, {\em On ill-posedness measures and space
  change in {S}obolev scales}, Z.~Anal.~Anwendungen, 16 (1997), pp.~979--1000.

\bibitem{HofWolf05}
{\sc B.~Hofmann and L.~von Wolfersdorf}, {\em Some results and a conjecture on
  the degree of ill-posedness for integration operators with weights}, Inverse
  Problems, 21 (2005), pp.~427--433.

\bibitem{HofWolf09}
\leavevmode\vrule height 2pt depth -1.6pt width 23pt, {\em A new result on the
  singular value asymptotics of integration operators with weights}, J.
  Integral Equations Appl., 21 (2009), pp.~281--295.

\bibitem{KiHo24}
{\sc S.~Kinderman and B.~Hofmann}, {\em Curious ill-posedness phenomena in the
  composition of non-compact linear operators}.
\newblock https://arxiv.org/abs/2401.14701.

\bibitem{kress}
{\sc R.~Kress}, {\em Linear integral equations}, vol.~82 of Applied
  Mathematical Sciences, Springer, New York, third~ed., 2014.

\bibitem{Miclo}
{\sc L.~Miclo}, {\em An example of application of discrete {H}ardy's
  inequalities}, Markov Process. Related Fields, 5 (1999), pp.~319--330.

\bibitem{Muck}
{\sc B.~Muckenhoupt}, {\em Hardy's inequality with weights}, Studia Math., 44
  (1972), pp.~31--38.

\bibitem{Nashed86}
{\sc M.~Z. Nashed}, {\em A new approach to classification and regularization of
  ill-posed operator equations}, in {I}nverse and {I}ll-posed {P}roblems
  ({S}ankt {W}olfgang, 1986), volume~4 of Notes Rep.~Math.~Sci.~Engrg.,
  Academic Press, Boston, MA, 1987, pp.~53--75.

\bibitem{StepUsha04}
{\sc V.~D. Stepanov and E.~P. Ushakova}, {\em Weighted estimates for integral
  operators with monotone kernels on a half-axis}, Sibirsk. Mat. Zh., 45
  (2004), pp.~1378--1390.

\bibitem{Usha11}
{\sc E.~P. Ushakova}, {\em Estimates for the singular numbers of
  {S}tieltjes-type transforms}, Sibirsk. Mat. Zh., 52 (2011), pp.~201--209.

\bibitem{Usha13}
{\sc E.~P. Ushakova}, {\em On compactness of {L}aplace and {S}tieltjes type
  transformations in {L}ebesgue spaces}, J. Operator Theory, 69 (2013),
  pp.~511--524.

\bibitem{Vaz}
{\sc J.~L. V\'{a}zquez}, {\em The mathematical theories of diffusion: nonlinear
  and fractional diffusion}, in Nonlocal and nonlinear diffusions and
  interactions: new methods and directions, vol.~2186 of Lecture Notes in
  Math., Springer, Cham, 2017, pp.~205--278.

\end{thebibliography}

\end{document}